\documentclass[a4paper,11pt]{article}
\usepackage[pagewise]{lineno}
\usepackage{amsmath}
\usepackage{amssymb}
\usepackage{cases}
\usepackage{mathrsfs}
\usepackage{amsfonts}
\usepackage{amsthm}
\usepackage{pstricks, pst-node, pst-text, pst-3d,psfrag}
\usepackage{graphicx}
\usepackage{indentfirst}
\usepackage{enumerate}
\usepackage{subfigure}
\usepackage{cite}
\usepackage[colorlinks=true]{hyperref}
\hypersetup{urlcolor=blue, citecolor=red}

\theoremstyle{plain}
\newtheorem{thm}{Theorem}[section]
\newtheorem{prop}[thm]{Proposition}

\theoremstyle{definition}

\theoremstyle{remark}
\newtheorem{rmk}[thm]{\textbf{Remark}}

\numberwithin{equation}{section}

\parindent 0.5cm
\evensidemargin 0cm \oddsidemargin 0cm \topmargin 0cm \textheight
22cm \textwidth 16cm \footskip 2cm \headsep 0cm

\makeatletter 
\@addtoreset{equation}{section}
\makeatother  

\newcommand\RR{\ensuremath{\mathbb{R}}}

\begin{document}

\title{Almost automorphy of minimal sets for $C^1$-smooth strongly monotone skew-product semiflows on Banach spaces\thanks{Supported by NSF of China No.11825106, 11771414 and 11971232, CAS Wu Wen-Tsun Key Laboratory of Mathematics, University of Science and Technology of China.}}

\setlength{\baselineskip}{16pt}

\author {
Yi Wang and Jinxiang Yao\thanks{Corresponding author: jxyao@mail.ustc.edu.cn (J. Yao).}
\\[2mm]
School of Mathematical Sciences\\
University of Science and Technology of China\\
Hefei, Anhui, 230026, P. R. China
}
\date{}
\maketitle

\begin{abstract}
We focus on the presence of almost automorphy in strongly monotone skew-product semiflows on Banach spaces. Under the $C^1$-smoothness assumption, it is shown that any linearly stable minimal set must be almost automorphic. This extends the celebrated result of Shen and Yi [Mem. Amer. Math. Soc. 136(1998), No. 647] for the classical $C^{1,\alpha}$-smooth systems. Based on this, one can reduce the regularity of the almost periodically forced differential equations and obtain the almost automorphic phenomena in a wider range.
\vskip 2mm

\textbf{Keywords}: Almost automorphy; Monotone skew-product semiflow; Principal Lyapunov exponents; Exponential Separation; $C^1$-smoothness.
\end{abstract}

\section{Introduction}
The notion of almost automorphy, which is a generalization to almost periodicity, was first introduced by Bochner \cite{Bochner1955} in a work of differential geometry. In the terminology of function theory, almost periodic and almost automorphic functions can be viewed as natural generalizations to the periodic ones in the strong and weak sense, respectively. From dynamical systems point of view,
Veech \cite{V63,V65,V67,V70} first introduced almost automorphic minimal flows. A compact flow $(Y,\mathbb{R})$ is called almost automorphic minimal if $Y$ is the closure of the orbit of an almost automorphic point. Here, a point $y\in Y$ is called almost automorphic if any net $\alpha'\subset \RR$
has a subnet $\alpha=\{t_n\}$ such that $T_\alpha y$, $T_{-\alpha}T_\alpha y$ exist and $T_{-\alpha}T_\alpha y=y$, where $T_\alpha$ is the generalized translation as $T_\alpha y=\lim_ny\cdot t_n$ provided that the limit exists (see Section \ref{section2}). Fundamental properties of almost automorphic functions/flows were further investigated in \cite{FB89,Te73,Te72}, etc.

Although an almost automorphic flow is a natural generalization of an almost periodic one, its topological and measure theoretical characterizations are different from an almost periodic one. For example, it may admit positive topological entropy (\cite{MP79}), it is not necessarily uniquely ergodic (\cite{Johnson81,Johnson82}) and its general measure theoretical characterization can be completely random (\cite{FB89}). Typical examples of almost automorphic minimal sets include the Toeplitz minimal sets in symbolic dynamics (\cite{DL98,MP79}), the Aubry-Mather sets on an annulus (\cite{Aubry82,Mather82}), and the Denjoy sets on the circle (\cite{Denjoy32}), etc. For other examples and details about almost automorphy, one may refer to \cite{Yi04,HuYi09} and references therein.

Almost autommorphy is fundamental and essential in almost periodic differential equations. As a matter of fact, in almost periodically forced differential equations, almost automorphic dynamics largely exist but almost periodicity needs not. For instance, Johnson (\cite{Johnson80,Johnson81A,Johnson81}) showed the almost automorphy in linear scalar ODEs (or $2$-dimensional linear ODE systems) with almost periodic coefficients. For almost periodically-forced scalar parabolic equations, a series work of Shen and Yi (\cite{ShYi95asymtpotic,ShYi95dynamic,ShYi95on,ShYi96}) discovered the almost automorphy of any minimal sets for separated boundary conditions (e.g., Dirichlet, Neumann and Robin Types); while, for periodic boundary conditions, the almost automorphic dynamics was recently systemically studied by Shen et. al in \cite{SWZ16,SWZ19,SWZ20a}.

Monotone skew-product (semi)flows are another abundant and important sources of almost automorphic dynamics. The study of monotone skew-product systems is a natural extension (to nonautonomous or external-forced systems) of the pioneering work by M. W. Hirsch (\cite{H84,H88a,H82,H85}) on monotone dynamical systems (see also Matano \cite{M86}). Large quantities of mathematical models of ordinary, functional and partial differential equations or difference equations can generate monotone dynamical systems. One may refer to the monographs and reviews  \cite{HS05,P02,S95,S17,ShYi98,Chue02} for more details.
Hirsch showed that the generic precompact orbit of a strongly monotone dynamical
system approaches the set of equilibria (referred as generic quasi-convergence). For $C^1$-smooth strongly monotone semiflows, the improved generic convergence was obtained by Pol\'{a}\v{c}ik \cite{P89} and Smith and Thieme \cite{ST91}. For strongly monotone discrete-time systems (mappings),
which are usually the Poincar\'{e} mappings associated with periodically forced differential equations, Pol\'{a}\v{c}ik and Tere\v{s}\v{c}\'{a}k \cite{PT92} proved that the
generic convergence to cycles occurs provided that the mapping $F$ is of $C^{1,\alpha}$-class (i.e., $F$ is a $C^1$-map with a locally $\alpha$-H\"{o}lder derivative $DF$, $\alpha\in (0,1]$). For the lower regularity of $F$, Tere\v{s}\v{c}\'{a}k \cite{T94} and Wang and Yao \cite{YJ20} succeeded in using different approaches to prove the generic convergence to cycles for $C^1$-smooth strongly monotone discrete-time systems.

Shen and Yi \cite{ShYi98} first discovered that almost automorphic phenomena largely exist
in strongly monotone skew-product semiflows $\Pi(x,y,t)=(u(x,y,t),y\cdot t)$ on $X\times Y$, $t\geq0$, where $X$ is a Banach space, $(Y,\mathbb{R})$ is a minimal and distal flow. More precisely, under the assumption that $u$ is $C^{1,\alpha}$ in $x\in X$, they \cite{ShYi98} studied the lifting dynamics on minimal sets of the strongly monotone skew-product semiflow $\Pi$, and proved that a linearly stable minimal set must be almost automorphic and that the generic convergence property failed in almost periodic systems even within the category of almost automorphy.  Their results have also been applied to show the existence of almost automorphic dynamics in a large class of almost periodic ordinary, functional and parabolic differential equations. Based on Shen and Yi's work, Obaya and his collaborators \cite{obaya05,obaya13book,obaya13} systematically analyzed the occurrence of almost automorphic dynamics in monotone skew-product semiflows with applications to functional differential equations.

The approach in \cite{ShYi98} is based on establishment of the exponential separation (see, e.g. \cite{PT93,Mi91}) along the minimal sets of $\Pi$, as well as
the idea and techniques for the construction of invariant measurable families of submanifolds in the so called Pesin's Theory (see \cite{Pesin76}); and hence, the regularity of $\alpha$-H\"{o}lder continuity of the $x$-derivative of $u$ cannot be dropped in \cite{ShYi98}.

In the present paper, we shall focus on the presence of almost automorphy in $C^1$-smooth strongly monotone skew-product semiflows.  Motivated by our recent work in \cite{YJ20}, we will extend the celebrated result of Shen and Yi \cite{ShYi98} by showing that any linearly stable minimal set must be almost automorphic for $C^1$-smooth strongly monotone skew-product semiflows. Based on our result, one can reduce the regularity of the almost periodically forced equations (inculding ODEs, parabolic equations and delay equations) investigated in \cite[Part III]{ShYi98}, and obtain the almost auotmorphic phenomena in a wider range.

As mentioned above, due to the lack of the $\alpha$-H\"{o}lder continuity, the Pesin's Theory with the Lyapunov exponents arguments in \cite{ShYi98} can not work any more. Inspired by \cite{T94,YJ20}, our new approach is to introduce a continuous cocycle over the Cartesian square $K\times K$ of the linearly stable minimal set $K$ rather than $K$ itself, and to construct a bundle map $T$ as the hybrid function of the $x$-derivative of $u$ along $K\times K$. Together with the exponential separation on $K$ with a novel ``internal growth control" property (see Proposition \ref{prop ES-contin}(v)) and a time-discretization technique to the skew-product semiflow, we accomplish our approach by proving the crucial Propositions \ref{Prop substitute infty} and \ref{Prop substitute finite}, which enables us to reduce the regularity of the systems and obtain the almost automorphy of the minimal sets.

This paper is organized as follows. In Section \ref{section2}, we agree on some notations, give relevant
definitions and preliminary results. We further present the exponential separation theorem (see Proposition \ref{prop ES-contin}) with the novel additional ``internal growth control'' property along principal bundles in Proposition \ref{prop ES-contin}(v), which turns out to be crucial for the proof of our main result. In section \ref{section3}, we state our main
results and give their proofs.

\section{Notations and Preliminary Results}\label{section2}

In this section, we first summarize some preliminary materials involved with topological dynamics which will appear throughout the paper.

Let $(Y,d_{Y})$ be a compact metric space, and $\sigma:Y\times\mathbb{R}\to Y$, $(y,t)\mapsto y\cdot t$ be a continuous flow on $Y$, denoted by $(Y,\sigma)$ or $(Y,\mathbb{R})$. A subset $M\subset Y$ is \emph{invariant} if $\sigma_{t}M=M$, for each $t\in\mathbb{R}$. A non-empty compact invariant set $M\subset Y$ is called \emph{minimal} if it contains no non-empty, proper, closed invariant subset. We say that $(Y,\mathbb{R})$ is minimal if $Y$ itself is a minimal set.

Let ${\mathbb{R}}^{+}$, ${\mathbb{R}}^{-}$ denote the nonnegative, nonpositive reals, respectively. Points $y_{1},y_{2}\in Y$ are called (\emph{positively, negatively}) \emph{distal}, if $\mathop{\inf}\limits_{t\in\mathbb{R}(t\in{\mathbb{R}}^{+},t\in{\mathbb{R}}^{-})}d_{Y}(y_{1}\cdot t,y_{2}\cdot t)>0$. We say that $y_{1},y_{2}$ are (\emph{positively, negatively}) \emph{proximal} if they are not (positively, negatively) distal. A point $y\in Y$ is said to be a \emph{distal point} if it is only proximal to itself. Moreover, $(Y,\mathbb{R})$ is a \emph{distal flow} if every point in $Y$ is a distal point. The (\emph{positive, negative}) \emph{proximal relation} $P(Y)(P_{+}(Y),P_{-}(Y))$ is a subset of $Y\times Y$ defined as follows: $P(Y)(P_{+}(Y),P_{-}(Y))=\{(y_{1},y_{2})\in Y\times Y|y_{1},y_{2}\text{ are (positively, negatively) proximal}\}$. $P(Y)$ is clearly invariant, reflexive and symmetric but not transitive in general.

\begin{prop}\label{prop P=P+=P-}
{\rm (\cite[Part I, Corollary 2.8]{ShYi98}).} Suppose that $P(Y)$ is an equivalence relation. Then $P(Y)=P_{+}(Y)=P_{-}(Y)$.
\end{prop}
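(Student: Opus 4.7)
The inclusions $P_+(Y), P_-(Y) \subset P(Y)$ are immediate from the definitions, since the infimum over the smaller time-sets $\mathbb{R}^+$ or $\mathbb{R}^-$ can only be at least as large as the infimum over $\mathbb{R}$. For the reverse direction, my plan is to work inside the Ellis enveloping semigroup $E(Y) = \overline{\{\sigma_t : t \in \mathbb{R}\}}$ of the flow (closure taken in $Y^Y$ under the pointwise topology) together with its compact subsemigroups $E^+(Y) = \overline{\{\sigma_t : t \geq 0\}}$ and $E^-(Y) = \overline{\{\sigma_t : t \leq 0\}}$. The standard characterization is that $(y_1,y_2) \in P(Y)$ precisely when some $p \in E(Y)$ satisfies $py_1 = py_2$, with analogous characterizations of $P_{\pm}(Y)$ via $E^{\pm}(Y)$.

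Given $(y_1, y_2) \in P(Y)$, I would choose a minimal idempotent $u \in E^+(Y)$, whose existence follows from the standard theory of compact right-topological semigroups applied to $E^+(Y)$. Idempotency gives $u(uy_i) = u^2 y_i = u y_i$, so $(y_i, u y_i) \in P(Y)$ for $i = 1, 2$, witnessed by $u$ itself. Combining these with the given $(y_1, y_2) \in P(Y)$ and iterating transitivity of the equivalence relation $P(Y)$ produces $(u y_1, u y_2) \in P(Y)$, so that some $q \in E(Y)$ satisfies $q(uy_1) = q(uy_2)$.

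The crucial step is to upgrade this equalization — achieved by $q \in E(Y)$ — to one witnessed by an element of $E^+(Y)$, using the group structure of $u E^+(Y) u$ afforded by the minimality of $u$ in $E^+(Y)$. First I would verify that $qu \in E^+(Y)$: writing $q = \lim_k \sigma_{s_k}$ and $u = \lim_n \sigma_{t_n}$ with $t_n \to +\infty$ (the case $u = \mathrm{id}$ being trivial), continuity of each $\sigma_{s_k}$ on $Y$ gives $\sigma_{s_k} u = \lim_n \sigma_{s_k + t_n} \in E^+(Y)$, since $s_k + t_n \to +\infty$ for fixed $k$; right-continuity of multiplication in $E(Y)$ then yields $qu = \lim_k \sigma_{s_k} u \in E^+(Y)$. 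Consequently $uqu = u \cdot (qu) \in u E^+(Y) u$, and the identity $(uqu) y_1 = u(q(u y_1)) = u(q(u y_2)) = (uqu) y_2$ together with invertibility of $uqu$ in the group $u E^+(Y) u$ forces $u y_1 = u y_2$. Thus $u \in E^+(Y)$ witnesses $(y_1, y_2) \in P_+(Y)$, and the symmetric argument using a minimal idempotent of $E^-(Y)$ yields $(y_1, y_2) \in P_-(Y)$.

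The main obstacle in this plan is the closure verification $qu \in E^+(Y)$ for arbitrary $q \in E(Y)$. This property is what keeps the entire minimal-idempotent/group-cancellation argument inside the one-sided subsemigroup $E^+(Y)$, so that $u$ need only be minimal in $E^+(Y)$ rather than in the full $E(Y)$ — the latter minimality being considerably harder to arrange simultaneously with membership in $E^+(Y)$.
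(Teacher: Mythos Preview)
The paper does not supply its own proof of this proposition; it simply records the statement and cites \cite[Part I, Corollary 2.8]{ShYi98}. So there is nothing in the present paper to compare against line by line.

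Your Ellis--semigroup argument is correct and is the standard route to this result. The characterization of $P(Y)$, $P_\pm(Y)$ via equalizing elements of $E(Y)$, $E^\pm(Y)$ is classical; the use of a minimal idempotent $u\in E^+(Y)$ to pass from $(y_1,y_2)\in P(Y)$ to $(uy_1,uy_2)\in P(Y)$ via transitivity, followed by cancellation in the group $uE^+(Y)u$, is exactly the right mechanism. The step you flag as the main obstacle---showing $qu\in E^+(Y)$ for arbitrary $q\in E(Y)$---is handled properly: left multiplication by the continuous map $\sigma_s$ is continuous on $Y^Y$ in the product topology, so $\sigma_s u\in E^+(Y)$ once $u$ is a limit along times tending to $+\infty$; continuity of $p\mapsto pu$ then closes up to general $q$. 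Your disposal of the exceptional case $u=\mathrm{id}$ is also fine: if $\mathrm{id}$ is a minimal idempotent of $E^+(Y)$ then $E^+(Y)$ is a group containing each $\sigma_t^{-1}=\sigma_{-t}$, forcing $E^+(Y)=E(Y)=E^-(Y)$ and hence $P_+(Y)=P(Y)=P_-(Y)$ outright.

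One small technical remark: $Y^Y$ with the product topology is generally not metrizable, so the limits $q=\lim_k\sigma_{s_k}$ and $u=\lim_n\sigma_{t_n}$ should be taken along nets rather than sequences. The argument is unaffected by this adjustment.
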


For $y\in Y$ and a net $\alpha=\{t_{n}\}$ in $\mathbb{R}$, we define $T_{\alpha}y:=\lim_ny\cdot t_n$, provided that the limit exists. $(Y,\mathbb{R})$ is called \emph{almost periodic} if any nets $\alpha^{\prime}$, $\beta^{\prime}$ in $\mathbb{R}$ have subnets $\alpha$, $\beta$ such that $T_{\beta}y$, $T_{\alpha}T_{\beta}y$, $T_{\alpha+\beta}y$ exist and $T_{\alpha}T_{\beta}y=T_{\alpha+\beta}y$ for all $y\in Y$, where $\alpha+\beta=\{t_{n}+s_{n}\}$ if $\alpha=\{t_{n}\}$, $\beta=\{s_{n}\}$. An almost periodic flow is necessarily distal (see, e.g. \cite{ShYi98}). A point $y\in Y$ is an \emph{almost automorphic point} if any net $\alpha^{\prime}$ in $\mathbb{R}$ has a subnet $\alpha=\{t_{n}\}$ such that $T_{\alpha}y$, $T_{-\alpha}T_{\alpha}y$ exist and $T_{-\alpha}T_{\alpha}y=y$, where $-\alpha=\{-t_{n}\}$. A flow $(Y,\mathbb{R})$ is \emph{almost automorphic} if there is an almost automorphic point $y_{0}\in Y$ with dense orbit. An almost automorphic flow is necessarily minimal (see, e.g. \cite{ShYi98}).

A \emph{flow homomorphism} from another continuous flow $(Z,\mathbb{R})$ to $(Y,\mathbb{R})$ is a continuous map $\phi:Z\to Y$ such that $\phi(z\cdot t)=\phi(z)\cdot t$ for all $z\in Z$, $t\in\mathbb{R}$. An onto flow homomorphism is called a \emph{flow epimorphism} and an one to one flow epimorphism is referred to as a \emph{flow isomorphism}. If $\phi$ is an epimorphism, then $(Z,\mathbb{R})$ is said to be an \emph{extension} of $(Y,\mathbb{R})$. An epimorphism $\phi$ is called an \emph{$N$-1 extension} for some integer $N\geq1$, if $card({\phi}^{-1}(y))=N$ for all $y\in Y$. Let $\phi:(Z,\mathbb{R})\to(Y,\mathbb{R})$ be a homomorphism of minimal flows, then $\phi$ is an \emph{almost automorphic extension} if there is a $y_{0}\in Y$ such that $card(\phi^{-1}(y_{0}))=1$. Then, actually $\phi$ is an \emph{almost 1-1 extension}, i.e., $\{y\in Y|card({\phi}^{-1}(y))=1\}$ is a residual subset of $Y$. A minimal flow $(Z,\mathbb{R})$ is \emph{almost automorphic} if and only if it is an almost automorphic extension of an almost periodic minimal flow $(Y,\mathbb{R})$ (see \cite{V65} or \cite[Part I, Theorem 2.14]{ShYi98}).

\begin{prop}\label{prop N-1}
{\rm (\cite{SS77} or \cite[Part I, Theorem 2.12]{ShYi98}).} Let $\phi:(Z,\mathbb{R})\to(Y,\mathbb{R})$ be a homomorphism of distal flows, where $(Y,\mathbb{R})$ is minimal. If there is $y_{0}\in Y$ with $card(\phi^{-1}(y_{0}))=N$, then the following holds: 1) $\phi$ is an $N$-1 extension; 2)$(Z,\mathbb{R})$ is almost periodic if and only if $(Y,\mathbb{R})$ is.
\end{prop}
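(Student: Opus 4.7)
My plan is to use the minimality of $(Y,\mathbb{R})$ to transport the $N$-point fiber over $y_0$ to any other fiber, and to use distality of $(Z,\mathbb{R})$ to prevent distinct preimages from collapsing. Write $\phi^{-1}(y_0)=\{z_1,\ldots,z_N\}$, and fix $y\in Y$. By minimality, select a net $\{t_\lambda\}\subset\mathbb{R}$ with $y_0\cdot t_\lambda\to y$; by compactness of $Z$, refine to a subnet so that $z_i\cdot t_\lambda\to w_i$ for every $i=1,\ldots,N$. Continuity and equivariance of $\phi$ give $\phi(w_i)=y$, while distality of $Z$ supplies $\inf_{t\in\mathbb{R}}d_Z(z_i\cdot t,z_j\cdot t)>0$ for $i\neq j$, forcing $w_i\neq w_j$. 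Hence $\mathrm{card}(\phi^{-1}(y))\geq N$, and the reverse inequality follows from the symmetric argument, this time using a net $\{s_\mu\}\subset\mathbb{R}$ with $y\cdot s_\mu\to y_0$.

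\textbf{Part (2), forward direction.} Because $\phi(Z)\subset Y$ is nonempty, closed, and invariant, minimality of $(Y,\mathbb{R})$ forces $\phi$ to be surjective. Assuming $(Z,\mathbb{R})$ is almost periodic, for arbitrary nets $\alpha',\beta'\subset\mathbb{R}$ extract the subnets $\alpha,\beta$ guaranteed by the almost periodicity of $Z$, so that $T_\beta z$, $T_\alpha T_\beta z$, $T_{\alpha+\beta}z$ exist for every $z\in Z$ and $T_\alpha T_\beta z=T_{\alpha+\beta}z$. For each $y\in Y$, pick any $z\in\phi^{-1}(y)$; continuity and equivariance of $\phi$ then yield $T_\beta y=\phi(T_\beta z)$ and so on, and the identity $T_\alpha T_\beta y=T_{\alpha+\beta}y$ descends to $Y$.

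\textbf{Part (2), reverse direction, and the main obstacle.} This is the crux. My plan is to pass to the Ellis semigroup $E(Z)\subset Z^Z$ (compact in the product topology), which is a group since $(Z,\mathbb{R})$ is distal. The map $\phi_\ast:E(Z)\to E(Y)$ defined by $\phi_\ast(p)(\phi(z)):=\phi(p(z))$ is well-defined (independent of the fiber representative thanks to continuity of $\phi$), continuous in the product topology, and a surjective semigroup homomorphism. Since $(Y,\mathbb{R})$ is almost periodic, $E(Y)$ is a compact topological group of continuous self-maps acting equicontinuously on $Y$. The target is to upgrade $E(Z)$ to a compact topological group of continuous maps, equivalent to equicontinuity of $\{\sigma_t\}_{t\in\mathbb{R}}$ on $Z$, which for the distal flow $Z$ characterizes almost periodicity. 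The main hurdle is to leverage the constant $N$-to-$1$ structure from part (1) to control $\ker\phi_\ast$: each element of $\ker\phi_\ast$ restricts to a permutation of every $N$-element fiber of $\phi$, so the kernel is ``finite on fibers''. I would argue equicontinuity on $Z$ by contradiction: if it failed, one could choose $z_\lambda,z'_\lambda\to z^\ast$ in $Z$ and $t_\lambda\in\mathbb{R}$ with $d_Z(z_\lambda\cdot t_\lambda,z'_\lambda\cdot t_\lambda)\geq\varepsilon_0>0$; after passing to subnets, $z_\lambda\cdot t_\lambda\to w$ and $z'_\lambda\cdot t_\lambda\to w'$, and equicontinuity of $E(Y)$ forces $\phi(w)=\phi(w')$, so $w\neq w'$ lie in the same $N$-element fiber. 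Ruling out this last configuration by coupling distality of $Z$ with the constant fiber cardinality is the delicate step where the real technical weight of the proposition sits.
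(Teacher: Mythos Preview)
The paper does not prove this proposition; it is quoted from Sacker--Sell \cite{SS77} and Shen--Yi \cite[Part I, Theorem 2.12]{ShYi98} as a known structural result, so there is no in-paper argument to compare against.

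Your Part (1) and the forward direction of Part (2) are the standard arguments and are correct. The reverse direction of Part (2) is where the real content lies, and there your proposal has a genuine gap, which you rightly flag: from a hypothetical failure of equicontinuity you manufacture $w\neq w'$ with $\phi(w)=\phi(w')$, but you do not rule this configuration out. Two remarks. First, the constant fiber cardinality from Part (1) is not enough on its own: there exist continuous surjections of compact metric spaces with every fiber of cardinality exactly $2$ in which distinct fiber-mates come arbitrarily close (take $Y=\{0\}\cup\{1/n:n\ge1\}$ and $Z=\{(1/n,\pm 1/n):n\ge1\}\cup\{(0,0),(0,1)\}$), so the flow structure must enter in an essential way beyond what you have used. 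Second, your Ellis-semigroup outline runs into a real obstacle: elements of $E(Z)$ need not be continuous when $Z$ is merely distal, so neither the finiteness of $\ker\phi_\ast$ nor the lifting of the topological-group structure from $E(Y)$ to $E(Z)$ is as immediate as the sketch suggests. The proofs in the cited references proceed through the structure theory of distal extensions (reducing to $Z$ minimal and analysing the tower of isometric extensions over the maximal equicontinuous factor), and it is precisely because that machinery is required that the present paper imports the result rather than reproving it.
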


Given a continuous flow $(Y,\mathbb{R})$ and a Banach space $X$, a continuous \emph{skew-product semiflow} $\Pi:X\times Y\times \mathbb{R}^{+}\to X\times Y$ is defined as:
\begin{equation}\label{df of skew-product}
\Pi(x,y,t)=(u(x,y,t),y\cdot t),\quad (x,y)\in X\times Y,\ t\in\mathbb{R}^{+},
\end{equation}
where $\Pi(\cdot,\cdot,t)$ can also be written as $\Pi_{t}(\cdot,\cdot)$, for all $t\in\mathbb{R}^{+}$ and satisfies (i) $\Pi_{0}=$Id and (ii) the \emph{cocycle property}: $u(x,y,t+s)=u(u(x,y,s),y\cdot s,t)$, for each $(x,y)\in X\times Y$ and $t,s\in\mathbb{R}^{+}$. We denote $p:X\times Y\to Y; (x,y)\mapsto y$ as the natural projection. A subset $M\subset X\times Y$ is called \emph{positively invariant} if $\Pi_{t}(M)\subset M$ for all $t\in\mathbb{R}^{+}$. A compact positively invariant set $K\subset X\times Y$ is \emph{minimal} if it does not contain any other nonempty compact positively invariant set than itself.

A \emph{flow extension} of a skew-product semiflow $(X\times Y,\Pi,\mathbb{R}^{+})$ is a skew-product flow $(X\times Y,\tilde{\Pi},\mathbb{R})$ such that $\tilde{\Pi}(x,y,t)=\Pi(x,y,t)$, for each $(x,y)\in X\times Y$ and $t\in\mathbb{R}^{+}$. A compact positively invariant subset is called admits a \emph{flow extension} if the semiflow restricted to it does. Actually, a compact positively invariant set $K\subset X\times Y$ admits a flow extension if every point in $K$ admits a unique backward orbit which remains inside the set $K$ (see \cite[Part II]{ShYi98}).

In this work, we need $C^{1}$-smoothness of the skew-product semiflow $\Pi$. Precisely, the skew-product semiflow $\Pi$ in (\ref{df of skew-product}) is said to be of \emph{class ${C}^{1}$ in $x$}, meaning that $u_{x}(x,y,t)$ exists for any $t>0$ and any $(x,y)\in X\times Y$; and for each fixed $t>0$, the map $(x,y)\mapsto u_{x}(x,y,t)\in\mathcal{L}(X)$ is continuous on any compact subset $K\subset X\times Y$; and moreover, for any $v\in X$, $u_{x}(x,y,t)v\to v$ as $t\to 0_{+}$ uniformly for $(x,y)$ in compact subsets of $X\times Y$.

Let $K\subset X\times Y$ be a compact, positively invariant set which admits a flow extension. For $(x,y)\in K$, we define the \emph{Lyapunov exponent} $\lambda(x,y)$ as $\lambda(x,y)=\mathop{\limsup}\limits_{t \to +\infty}\frac{\ln\|u_{x}(x,y,t)\|}{t}$. The number $\lambda_{K}=\mathop{\sup}\limits_{(x,y)\in K}\lambda(x,y)$ is called the \emph{principal Lyapunov exponent} on $K$. If $\lambda_{K}\leq0$, then $K$ is said to be \emph{linearly stable}.

\begin{prop}\label{Prop norm growth control}
{\rm (\cite[PartII, Corollary 4.2]{ShYi98}).} Assume that $(Y,\mathbb{R})$ is minimal and $\Pi$ is of class ${C}^{1}$ in $x$. Assume also that $K\subset X\times Y$ is a compact, positively invariant set which admits a flow extension; moreover, $K$ is \emph{linearly stable}. Then for any $\epsilon>0$, there is a $C_{\epsilon}>0$ such that $\|u_{x}(x,y,t)\|\leq C_{\epsilon}e^{\varepsilon t}$, for all $t\geq0$ and $(x,y)\in K$.
\end{prop}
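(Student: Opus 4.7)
The plan is to upgrade the pointwise hypothesis $\lambda(x,y) \leq \lambda_{K} \leq 0$ on $K$ to a uniform sub-exponential estimate, by combining a finite open cover of $K$ with an iteration along orbits driven by the cocycle identity $u_{x}(x,y,t+s) = u_{x}(\Pi_{s}(x,y),t) \cdot u_{x}(x,y,s)$.

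Fix $\varepsilon > 0$. At each point $(x,y) \in K$, the hypothesis $\limsup_{t \to +\infty} \frac{1}{t}\ln \|u_{x}(x,y,t)\| \leq 0$ lets me select some $\tau(x,y) > 0$ with $\|u_{x}(x,y,\tau(x,y))\| \leq e^{\varepsilon \tau(x,y)/2}$. By continuity of the map $(x',y') \mapsto u_{x}(x',y',\tau(x,y))$ on $K$, this strict bound persists as the slightly weaker estimate $\|u_{x}(\cdot,\tau(x,y))\| \leq e^{\varepsilon \tau(x,y)}$ on a relatively open neighborhood $V(x,y) \subset K$. Compactness of $K$ yields a finite subcover $V(x_{1},y_{1}),\ldots, V(x_{m},y_{m})$, and I set $\tau_{i} := \tau(x_{i},y_{i})$ and $R := \max_{i} \tau_{i}$.

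I then iterate: for any $(x_{0},y_{0}) \in K$, some $V(x_{i_{1}},y_{i_{1}})$ contains it, so $\|u_{x}(x_{0},y_{0},\tau_{i_{1}})\| \leq e^{\varepsilon \tau_{i_{1}}}$; its image $(x_{1},y_{1}) := \Pi_{\tau_{i_{1}}}(x_{0},y_{0})$ again lies in $K$, enabling a new selection $\tau_{i_{2}}$, and so on. Setting $T_{k} := \tau_{i_{1}}+\cdots+\tau_{i_{k}}$, the cocycle identity gives
$$
\|u_{x}(x_{0},y_{0},T_{k})\| \leq \prod_{j=1}^{k} \|u_{x}(x_{j-1},y_{j-1},\tau_{i_{j}})\| \leq e^{\varepsilon T_{k}}.
$$
For arbitrary $t \geq 0$, writing $t = T_{k} + s$ with $s \in [0, R]$ and taking $M := \sup\{\|u_{x}(x,y,s)\| : (x,y) \in K,\ s \in [0,R]\}$ (finite by the assumed $C^{1}$-continuity of $u_{x}$ on the compact set $K\times[0,R]$), the cocycle yields $\|u_{x}(x_{0},y_{0},t)\| \leq M \cdot e^{\varepsilon T_{k}} \leq M e^{\varepsilon t}$, so $C_{\varepsilon} := M$ works.

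The main obstacle in this route is the passage from the pointwise $\limsup$ bound, which only furnishes one good time $\tau(x,y)$ at each individual point, to the uniform finite covering described above. The key device is to choose the pointwise estimate with a strict margin $\varepsilon/2$ rather than $\varepsilon$, creating exactly the slack that continuity of $u_{x}(\cdot,\tau)$ needs to propagate the inequality to a full open neighborhood in $K$; this is where the $C^{1}$-in-$x$ hypothesis, ensuring continuity of $(x,y)\mapsto u_{x}(x,y,t)\in\mathcal{L}(X)$ on compact subsets, is crucial. Note that the flow-extension assumption on $K$ and the minimality of $Y$ are not explicitly used in this argument; they enter the broader theory to guarantee that $\lambda(x,y)$ is a meaningful exponent along the full two-sided orbit.
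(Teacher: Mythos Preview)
The paper does not prove this proposition; it is quoted verbatim from \cite[Part II, Corollary 4.2]{ShYi98} and used as a black box. Your argument is the standard one for upgrading a pointwise nonpositive top Lyapunov exponent to a uniform subexponential bound via the cocycle identity, a finite cover, and iteration along orbits, and it is essentially what one finds in Shen--Yi's monograph. So at the level of strategy there is nothing to compare.

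There is one point where your write-up is looser than the hypotheses strictly allow. You assert that
\[
M:=\sup\{\|u_x(x,y,s)\|:(x,y)\in K,\ s\in[0,R]\}<\infty
\]
``by the assumed $C^1$-continuity of $u_x$ on the compact set $K\times[0,R]$''. But the $C^1$ hypothesis in this paper only guarantees norm continuity of $(x,y)\mapsto u_x(x,y,t)$ on $K$ for each \emph{fixed} $t>0$, together with strong convergence $u_x(x,y,t)v\to v$ as $t\to 0_+$ uniformly in $(x,y)\in K$; joint continuity in $(x,y,t)$ in the operator norm is not assumed. The finiteness of $M$ is still true, but it needs a short extra step (for instance, combine the cocycle identity with the uniform boundedness principle, or first show that $(x,y,t)\mapsto u_x(x,y,t)v$ is jointly continuous for each fixed $v$ and then invoke Banach--Steinhaus over the compact parameter set $K\times[0,R]$). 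Alternatively, you can sidestep the issue near $s=0$ by backing up one step in the iteration: for $t\ge R$ write $t=T_{k-1}+s'$ with $s'\in[r,2R)$, where $r=\min_i\tau_i>0$, and bound $\sup_{(x,y)\in K,\ s'\in[r,2R]}\|u_x(x,y,s')\|$; this stays away from $s'=0$ but still ultimately relies on some uniformity in $s'$. Either way the gap is minor and routine to close, but the parenthetical as written claims more than the stated hypotheses directly give.
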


A closed convex subset $C\subset X$ is called a cone of $X$ if $\lambda C\subset C$ for all $\lambda>0$ and $C\cap(-C)=\{0\}$. We call $(X,C)$ a \emph{strongly ordered} Banach space if $C$ has nonempty interior ${\rm Int}C$. Let $X^{*}$ be the dual space of $X$. $C^{*}$:=\{$l\in X^{*}:l(v)\geq 0$ for any $v\in C$\} is called the dual cone of $C$. If ${\rm Int}C\neq\emptyset$, then $C^{*}$ is indeed a closed convex cone in $X^{*}$ (see \cite{D85}). Let $C_{s}^{*}=\{l\in C^{*}:l(v)>0,\text{ for any }v\in C\backslash\{0\}\}$. A bounded linear operator $L:X\to X$ is \emph{strongly positive} if $Lv\gg0$ whenever $v>0$.

Let $(X,C)$ be a \emph{strongly ordered} Banach space. A closed set $O_{+}(X,Y):=\{((x_{1},y),(x_{2},y))|x_{1}-x_{2}\in C\}$ induces a (strong) partial ordering `$\geq$' on each fiber $p^{-1}(y)\ (y\in Y)$ as follows: $(x_{1},y)\geq(x_{2},y)$ if $((x_{1},y),(x_{2},y))\in O_{+}(X,Y)$; $(x_{1},y)>(x_{2},y)$ if $(x_{1},y)\geq(x_{2},y),(x_{1},y)\neq(x_{2},y)$; $(x_{1},y)\gg(x_{2},y)$ if $((x_{1},y),(x_{2},y))\in \text{Int}O_{+}(X,Y)$, i.e., $x_{1}-x_{2}\in\text{Int}C$. $O_{-}(X,Y)$ is the reflection of $O_{+}(X,Y)$, that is, $O_{-}(X,Y)=\{((x_{1},y),(x_{2},y))|((x_{2},y),(x_{1},y))\in O_{+}(X,Y)\}$. The set $O(X,Y)=O_{+}(X,Y)\cup O_{-}(X,Y)$ is referred to as the \emph{order relation}, that is, $(x_{1},y_{1}),(x_{2},y_{2})$ are \emph{ordered} if and only if $y_{1}=y_{2}=y$ and $((x_{1},y_{1}),(x_{2},y_{2}))\in O(X,Y)$. The order relation on a minimal subset $K\subset X\times Y$ is defined as $O(K)=\{((x_{1},y),(x_{2},y))|(x_{1},y),(x_{2},y)\in K\ \text{and}\ x_{1}-x_{2}\in \pm C\}$.

The skew-product semiflow $\Pi$ is called \emph{strongly order preserving} if $\Pi(x_{1},y,t)\gg\Pi(x_{2},y,t)$ whenever $(x_{1},y)>(x_{2},y)$ and $t>0$. We say that $\Pi$ is \emph{strongly monotone} if $u_{x}(x,y,t)$ is a strongly positive operator for any $(x,y)\in X\times Y,\ t>0$. Clearly, by virtue of \cite[PartII, Theorem 4.3]{ShYi98}, a strongly monotone skew-product semiflow must be a strongly order preserving skew-product semiflow.

\begin{prop}\label{Prop residual set unordered}
Assume that $(Y,\mathbb{R})$ is minimal and $\Pi$ is strongly order preserving, and let $K\subset X\times Y$ be a minimal set of  which admits a flow extension. Then

\textnormal{(i)} there is a residual and invariant set $Y_{0}\subset Y$ such that for any $y\in Y_{0}$, no two elements on $K\cap p^{-1}(y)$ are ordered;

\textnormal{(ii)} If $(x_{1},y),(x_{2},y)\in K$ are ordered, then they are proximal, that is, the order relation implies the proximal relation on $K$.
\end{prop}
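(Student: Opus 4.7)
I would prove (ii) first and then deduce (i) from it by a Baire category argument.

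\textbf{Part (ii).} Given $(x_1,y_0)>(x_2,y_0)$ in $K$, I would consider the $\omega$-limit set $\Omega\subset K\times_Y K$ of $((x_1,y_0),(x_2,y_0))$ under the diagonal semiflow on the fiber product $K\times_Y K:=\{((p,y),(q,y)):(p,y),(q,y)\in K\}$. By strong order preservation and the closedness of the cone $C$, every element of $\Omega$ satisfies $p\ge q$. In a minimal subset $M\subseteq\Omega$, pick an element realising $\inf_{M}\|p-q\|$; if this infimum were strictly positive, strong monotonicity (encoded in the strongly positive derivative $u_x$) combined with minimality of $(Y,\mathbb{R})$ and a recurrence $\bar y\cdot t_n\to\bar y$ would yield an iterate in $M$ with strictly smaller fiberwise distance, contradicting the choice of the minimiser. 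Hence $M\subset\Delta(K):=\{((p,y),(p,y)):(p,y)\in K\}$, and along any approach sequence into $M$ we get $\|u(x_1,y_0,t_n)-u(x_2,y_0,t_n)\|\to 0$, i.e., proximality.

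\textbf{Part (i).} Let $A:=\{y\in Y:\exists\,(p,y)>(q,y)\text{ in }K\}$ and $Y_0:=Y\setminus A$. With
\[
F_n:=\{((p,y),(q,y))\in K\times_Y K:\ p-q\in C,\ \|p-q\|\ge 1/n\},
\]
each $A_n:=\pi_Y(F_n)$ is closed (continuous image of a compact set), so $A=\bigcup_n A_n$ is $F_\sigma$ and $Y_0$ is $G_\delta$. Strong order preservation makes $A$ forward invariant, and the flow extension on $K$ combined with the monotone reduction for $u_x$ upgrades this to full invariance, so $Y_0$ is invariant. For density of $Y_0$, suppose some $A_{n_0}$ has nonempty interior $W\subset Y$. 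By minimality of $(Y,\mathbb{R})$, every orbit in $Y$ returns to $W$ at an unbounded time sequence $t_k\to+\infty$; choosing ordered pairs in $F_{n_0}$ over these return times and extracting a subsequential limit in the compact set $F_{n_0}\subset K\times_Y K$, one constructs a pair in $K$ whose forward orbit remains in the closed set $\overline{F_{n_0}}$ and therefore has fiberwise distance bounded below by $1/n_0$ for all $t\ge 0$. This contradicts Part (ii), which forces any ordered orbit to be proximal. Hence each $A_n$ has empty interior, $A$ is meager, and $Y_0$ is residual.

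\textbf{Main obstacle.} The crucial step is the collapse of the minimal set $M$ into the diagonal in Part (ii): turning strong monotonicity of $u_x$ into a strict decrease of fiberwise distance along a recurrence time. This is where the strongly positive derivative plays an essential role, and I would invoke the monotone reduction of Shen--Yi \cite[Part II]{ShYi98} rather than redevelop it. Once this is in hand, the Baire/compactness bookkeeping in Part (i) is routine.
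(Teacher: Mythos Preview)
The paper does not prove this proposition itself; it simply cites \cite[Part II, Theorem 3.2 and Corollary 3.3]{ShYi98}. In Shen--Yi the logical order is the reverse of yours: (i) is established first, and (ii) is then an immediate corollary---given an ordered pair $(x_1,y)>(x_2,y)$, choose $t_n$ with $y\cdot t_n\to y_0\in Y_0$, pass to limits, use closedness of $C$ to see the limits are ordered, and conclude from (i) that they coincide; this gives positive proximality directly.

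Your reversed route has two genuine gaps. In Part~(ii), strong order preservation gives $u(p,\bar y,t)\gg u(q,\bar y,t)$ whenever $p>q$, but this says nothing about $\|u(p,\bar y,t)-u(q,\bar y,t)\|$ being smaller than $\|p-q\|$; there is no contraction mechanism, so the ``strictly smaller fiberwise distance along a recurrence'' step does not follow from the hypotheses. Note too that the proposition assumes only that $\Pi$ is strongly order preserving, so the strongly positive derivative $u_x$ you invoke is not available. In Part~(i), from the existence of ordered pairs in $F_{n_0}$ over each return time $y_0\cdot t_k\in W$ you cannot manufacture a single pair whose \emph{entire forward orbit} stays in $F_{n_0}$: the pairs over different $t_k$ are unrelated, and a subsequential limit in the compact set $F_{n_0}$ yields just one more element of $F_{n_0}$, not a positively distal ordered orbit. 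And even if you had such a pair, it would only contradict \emph{positive} proximality, whereas your Part~(ii) delivers only proximality (infimum over all of $\mathbb{R}$); without knowing $P(K)=P_+(K)$---which in the paper is deduced \emph{after} this proposition---the contradiction does not close.
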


\begin{proof}
See \cite[PartII, Theorem 3.2 and Corollary 3.3]{ShYi98}.
\end{proof}

Before ending this section, we present the following exponential separation theorem for homeomorphisms. One may refer to \cite{PT93,Mi91,Mi94,JS} for more details and applications of this theorem with the standard items (i)-(iii). Here, we emphasize a novel ``internal growth control'' property along the principal bundles obtained in item (v) of the following proposition, which turns out to be crucial for the proof of our main results in the next section. A weaker version of such ``internal growth control'' property was obtained in \cite{T94,YJ20} for exponential separation for continuous maps.

\begin{prop}\label{prop ES-contin}
{\rm (Exponential Separation Theorem).} Let $(X,C)$ be a strongly ordered Banach space, $F: E \to E$ is a homeomorphism of a compact metric space $E$, $T$ is a continuous family of operators $\{T_{x}\in L(X,X):x\in E\}$, and for any $x \in E$, $T_{x}$ is a compact and strongly positive operator, then there exist one dimensional continuous bundles $E\times X_{1x}$ and $E\times X_{1x}^{*}$ such that:

\textnormal{(i)} $X_{1x}$=span$\{v_{x}\}$ and $X_{1x}^{*}$=span$\{l_{x}\}$, where $\|v_{x}\|=1=\|l_{x}\|$, $v_{x}\gg0$, $l_{x}\in C_{s}^{*}$, and both $l_{x}$ and $v_{x}$ depend continuously on $x\in E$.

\textnormal{(ii)} $T_{x}X_{1x}=X_{1Fx}$, $T_{x}^{*}X_{1Fx}^{*}=X_{1x}^{*}$.

\textnormal{(iii)} There are constants $M>0$ and $0<\gamma<1$ such that
\begin{equation}\label{3.2}
\|T_{x}^{n}w\|\leq M\gamma^{n}\|T_{x}^{n}v_{x}\|,
\end{equation}
for all $x\in E$, $n\geq1$ and $l_{x}(w)=0$ with $\|w\|$=1, where $T_{x}^{n}=T_{F^{n-1}x}\circ T_{F^{n-2}x}\circ\cdot\cdot\cdot\circ T_{Fx}\circ T_{x}$.

\textnormal{(iv)} If $x\in E, u\in X$ with $l_{x}(u)>0$, then $T_{x}^{n}u\in{\rm Int}C$ for all $n$ sufficiently large.

\textnormal{(v) (Internal growth control along principal bundles)} For any $\epsilon>0$, there is a constant $\delta_{1}>0$ such that, for any $\delta\in[0,\delta_{1}]$, $x,y\in E$, $m\geq 1$ with $d_{E}(F^{i}x,F^{i}y)<\delta$, $0\leq i\leq m$, we have
\begin{equation}\label{3.1}
\|T_{y}^{i}v_{y}\|\leq (1+\epsilon)^{i}\|T_{x}^{i}v_{x}\|,
\end{equation}
 for all $1\leq i\leq m$.
\end{prop}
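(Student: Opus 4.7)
The plan is as follows. Parts (i)--(iv) are the classical exponential separation statements for a continuous family of compact strongly positive operators over a homeomorphism of a compact metric space, and I would simply invoke the constructions in the references \cite{PT93,Mi91} cited before the proposition: the principal eigendirections $X_{1x}$, $X_{1x}^{*}$ and their continuous dependence on $x$ come from a fiberwise Krein--Rutman argument together with the uniqueness of the leading positive eigenvector; the invariance (ii) is then tautological, and (iii)--(iv) follow from the uniform spectral gap enforced by strong positivity and compactness along the compact base $E$. The genuinely new content is (v), and that is where I would concentrate the work.

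The central idea for (v) is to reduce the operator inequality to a scalar cocycle inequality. Since $T_x X_{1x}=X_{1Fx}$ with $v_x\gg 0$ and $v_{Fx}\gg 0$, and since $T_x$ is strongly positive, we have $T_x v_x\gg 0$, which forces
\[
T_x v_x \;=\; g(x)\,v_{Fx},\qquad g(x):=\|T_x v_x\|>0.
\]
Iterating gives the product formula
\[
\|T_x^{n} v_x\| \;=\; \prod_{i=0}^{n-1} g(F^{i}x),\qquad n\ge 1,
\]
so the bound (\ref{3.1}) reduces to showing $\prod_{i=0}^{m-1}g(F^{i}y)/g(F^{i}x)\le (1+\eps)^{m}$ whenever the two orbits remain within $\delta$ of each other. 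It then suffices to prove the one-step estimate $g(F^{i}y)/g(F^{i}x)\le 1+\eps$ and multiply.

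For this one-step estimate I would use compactness of $E$. Continuity of $g$ on $E$ is immediate from the continuity of $x\mapsto T_x$ in $\mathcal{L}(X)$ and of $x\mapsto v_x$ in $X$ granted by (i). Because $g>0$ pointwise and $E$ is compact, there exists $g_0>0$ with $g\ge g_0$ on $E$, and $g$ is uniformly continuous. So given $\eps>0$, choose $\delta_1>0$ such that $d_E(x',y')<\delta_1$ implies $|g(x')-g(y')|<\eps\, g_0$; then
\[
\frac{g(F^{i}y)}{g(F^{i}x)}\;\le\; 1+\frac{|g(F^{i}y)-g(F^{i}x)|}{g(F^{i}x)}\;\le\; 1+\frac{\eps g_0}{g_0}\;=\;1+\eps,
\]
and the product over $0\le i\le k-1$ gives (\ref{3.1}) for every $k\in\{1,\ldots,m\}$.

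The main obstacle is conceptual rather than computational and is already absorbed into (i): the argument hinges on having a genuinely continuous, pointwise positive, canonical normalization $v_x$ of the principal bundle, so that the scalar cocycle $g$ is both well-defined and continuous on the compact base. Once that continuity and the uniform lower bound $g\ge g_0>0$ are available, (v) is essentially a telescoping estimate; this is the improvement over the weaker ``internal growth control'' for continuous maps in \cite{T94,YJ20}, where the lack of a homeomorphism required substantially more care in the construction of $v_x$.
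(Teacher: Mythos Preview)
Your argument for (v) is correct and is essentially identical to the paper's: both reduce to the scalar cocycle $g(x)=\|T_x v_x\|$, use compactness of $E$ to get a uniform lower bound $g\ge g_0>0$, use uniform continuity for the one-step ratio bound $g(F^iy)/g(F^ix)\le 1+\eps$, and then telescope. The only minor discrepancy is that the paper does not relegate (iv) to the literature but gives a two-line proof (decompose $u$ along $X_{1x}\oplus\ker l_x$ and use (iii) to see that $T_x^n u/\|T_x^n u\|$ approaches the compact set $\{v_{F^n x}\}\subset\mathrm{Int}\,C$); you may want to include that, since (iv) is not stated in \cite{PT93} in quite this form.
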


\begin{proof}
For the proof of the standard items (i)-(iii), we refer to \cite{PT93}. Here we give the proof of (iv)-(v).

(iv). Decompose $u$ by $u=v+w$, with $v=\frac{l_{x}(u)}{l_{x}(v_{x})}v_{x}$, $l_{x}(w)=0$. Then we have
\begin{alignat*}{2}
\|v_{F^{n}x}-\frac{T_{x}^{n}u}{\|T_{x}^{n}u\|}\|&\leq\|v_{F^{n}x}-\frac{T_{x}^{n}v}{\|T_{x}^{n}v\|}\|+\|\frac{T_{x}^{n}v}{\|T_{x}^{n}v\|}-\frac{T_{x}^{n}(v+w)}{\|T_{x}^{n}(v+w)\|}\|\\
&\overset{\textnormal{(ii)}}{=}0+\|\frac{T_{x}^{n}v}{\|T_{x}^{n}v\|}-\frac{T_{x}^{n}(v+w)}{\|T_{x}^{n}(v+w)\|}\|\\
&\overset{\textnormal{(iii)}}{\to}0,\text{ as }n\to\infty.
\end{alignat*}
Since $\{v_{x}:x\in E\}$ is a compact subset of ${\rm Int}C$ by (i), $T_{x}^{n}u\in{\rm Int}C$ for all $n$ sufficiently large. This proves (iv).

(v). Since $T_{x}v_{x}$ continuously depends on $x\in E$, $\{T_{x}v_{x}:x\in E\}$ is a compact subset of ${\rm Int}C$. Then there exists a constant $r>0$ such that $\|T_{x}v_{x}\|>r$, for any $x\in E$. For any $\epsilon>0$, by $T_{x}v_{x}$ uniformly continuously depends on $x\in E$, there exists a constant $\delta_{1}>0$ such that $\|T_{x}v_{x}-T_{x^{\prime}}v_{x^{\prime}}\|\leq\epsilon r<\epsilon\|T_{x^{\prime}}v_{x^{\prime}}\|$, for any $x,x^{\prime}\in E$ with $d_{E}(x,x^{\prime})<\delta_{1}$. Therefore, for any $\delta\in[0,\delta_{1}]$, $x,y\in E$, $m\geq 1$ with $d_{E}(F^{i}x,F^{i}y)<\delta$, $0\leq i\leq m$, we have
\begin{alignat*}{2}
&\frac{\|T_{y}^{i}v_{y}\|}{\|T_{x}^{i}v_{x}\|}=
\frac{\|T_{F^{i-1}y}v_{F^{i-1}y}\|\cdots\|T_{Fy}v_{Fy}\|\cdot\|T_{y}v_{y}\|}{\|T_{F^{i-1}x}v_{F^{i-1}x}\|\cdots\|T_{Fx}v_{Fx}\|\cdot\|T_{x}v_{x}\|}
<(1+\epsilon)^{i} , \quad 1\leq i\leq m.
\end{alignat*}
This proves (v).
\end{proof}

\section{Main Results and Proofs}\label{section3}

In this section, our standing hypotheses are as follows:
\vskip 2mm

\noindent \textbf{(H1)} $(Y,\mathbb{R})$ is minimal and distal, and $(X,C)$ is a strongly ordered Banach space.
\vskip 1mm

\noindent \textbf{(H2)} $\Pi$ is a strongly monotone skew-product semiflow on $X\times Y$ of class ${C}^{1}$ in $x$.
\vskip 2mm

\noindent \textbf{(H3)} $K\subset X\times Y$ is a minimal set which admits a flow extension.
\vskip 2mm

Now we state our main results on the almost automorphy of the minimal set $K$.

\begin{thm}\label{Theorem main th}
Assume that \textnormal{(H1)-(H3)} hold. Assume also the following:

\textnormal{(i)} There is $\tau>0$ such that $u_{x}(x,y,\tau)$ is compact for all $(x,y)\in\hat{K}$, where $\hat{K}=\{(sx_{1}+(1-s)x_{2},y):(x_{1},y),(x_{2},y)\in K\ \textnormal{and}\ s\in[0,1]\}$.

\textnormal{(ii)} $K$ is linearly stable.

\noindent Then there is a minimal flow $(\tilde{Y},\mathbb{R})$ and flow homomorphisms
$$p^{*}:(K,\mathbb{R})\to(\tilde{Y},\mathbb{R})\quad and \quad\tilde{p}:(\tilde{Y},\mathbb{R})\to(Y,\mathbb{R})$$
such that $(\tilde{Y},\mathbb{R})$ is distal, $\tilde{p}$ is an $N$-1 extension for some integer $N\geq1$, $p^{*}$ is an almost 1-1 extension and $p=\tilde{p}\circ p^{*}$, where $p:K\to Y$ denotes the natural projection. Moreover, if $(Y,\mathbb{R})$ is almost periodic, then $(K,\mathbb{R})$ is almost automorphic.
\end{thm}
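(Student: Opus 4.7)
The strategy is to construct $(\tilde Y,\mathbb R)$ as a quotient of $K$ by a closed invariant equivalence relation on $K$ that captures precisely the proximal structure of $K$ inside each fiber of $p$. With $p^{*}: K \to \tilde Y$ the quotient projection and $\tilde p: \tilde Y \to Y$ the induced factor, one automatically obtains a minimal flow $\tilde Y$ satisfying $p = \tilde p \circ p^{*}$. The main work splits into (a) showing $(\tilde Y,\mathbb R)$ is distal, so that Proposition \ref{prop N-1}(1) forces $\tilde p$ to be an $N$-$1$ extension for some $N \geq 1$; and (b) showing $p^{*}$ is almost $1$-$1$. The final claim is then formal: if $(Y,\mathbb R)$ is almost periodic then Proposition \ref{prop N-1}(2) promotes $(\tilde Y,\mathbb R)$ to an almost periodic minimal flow, so that $(K,\mathbb R)$ becomes an almost $1$-$1$ extension of an almost periodic flow, hence almost automorphic by Veech's characterization recalled in Section \ref{section2}.

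\textbf{Hybrid cocycle on $K\times K$.} Following the strategy outlined in the introduction, I would introduce a continuous family of operators over the subset $E = \{((x_1,y),(x_2,y)) : (x_1,y),(x_2,y) \in K\}$ of the Cartesian square $K\times K$, equipped with the time-$\tau$ shift $F$ induced by $\Pi_\tau$. Concretely, set
\begin{equation*}
T_{((x_1,y),(x_2,y))} \;=\; \int_0^1 u_x\bigl(s x_1 + (1-s) x_2,\, y,\, \tau\bigr)\, ds \;\in\; \mathcal L(X).
\end{equation*}
Hypothesis (i), the convex-hull structure of $\hat K$, and strong monotonicity ensure each $T_\zeta$ is compact and strongly positive, while the fundamental identity $T_\zeta(x_1 - x_2) = u(x_1,y,\tau) - u(x_2,y,\tau)$ holds by the mean value theorem in $x$. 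Proposition \ref{prop ES-contin} applied to the pair $(F,T)$ then produces a continuous principal bundle $\zeta \mapsto v_\zeta \in \mathrm{Int}\, C$, its dual $l_\zeta \in C_s^{*}$, the exponential gap (iii), and, crucially, the new internal growth control (v).

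\textbf{Rigidity and distality of $\tilde Y$.} The heart of the argument is a two-sided rigidity statement (an infinite-time version and a finite-time version) of the following shape: if $(x_1,y),(x_2,y) \in K$ are ordered, then the discrete evolution $u(x_1,y,n\tau) - u(x_2,y,n\tau)$ asymptotically aligns with $v_{F^n\zeta}$ and its norm behaves, up to a controlled error, like $\|T_\zeta^n v_\zeta\|$. Linear stability together with Proposition \ref{Prop norm growth control} forces this growth to be subexponential, and item (v) of Proposition \ref{prop ES-contin} uniformly propagates the bound to neighboring trajectories; a time-$\tau$ discretization then transfers the discrete estimate back to the continuous semiflow. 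From this rigidity one deduces that $P_{+}(K)$ and $P_{-}(K)$ coincide, are closed, and are transitive on $K$, so that $P(K)$ is a closed invariant equivalence relation (cf. Proposition \ref{prop P=P+=P-}). Setting $\tilde Y := K/P(K)$ yields a distal minimal flow, and the constancy of the fiber cardinality of $\tilde p$ follows from Proposition \ref{prop N-1}(1) together with a bound on the number of pairwise unordered elements in a single $K$-fiber.

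\textbf{Almost $1$-$1$ and main obstacle.} By Proposition \ref{Prop residual set unordered}(i) there is a residual invariant set $Y_0 \subset Y$ such that no two elements of $K \cap p^{-1}(y)$ are ordered for $y \in Y_0$; by Proposition \ref{Prop residual set unordered}(ii) the proximal relation on $K$ is contained in the order relation, so for $y \in Y_0$ every $P(K)$-class inside $K \cap p^{-1}(y)$ is a singleton. Hence $p^{*}$ is almost $1$-$1$, which completes the structural picture. The main obstacle is the rigidity step: in the $C^{1,\alpha}$ setting of \cite{ShYi98} Pesin's theory supplies measurable invariant manifolds that control the nonlinear evolution along the principal direction, but under only $C^1$ smoothness this machinery fails, and one must substitute for it via the hybrid cocycle on $K \times K$, using the new item (v) of Proposition \ref{prop ES-contin} carefully synchronized with a time-$\tau$ discretization so that the discrete bounds along the hybrid cocycle transfer back to the continuous skew-product semiflow.
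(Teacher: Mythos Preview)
Your overall architecture matches the paper's: the hybrid cocycle over the fiber-diagonal of $K\times K$, Proposition~\ref{prop ES-contin} (especially item (v)), the quotient $\tilde Y = K/P(K)$, and then Proposition~\ref{prop N-1} together with the residual set $Y_0$. But the core rigidity step is inverted, and this creates a real gap. The paper's two key propositions (Propositions~\ref{Prop substitute infty} and~\ref{Prop substitute finite}) are about \emph{unordered} pairs, not ordered ones: if $(x_3,\tilde y),(x_4,\tilde y)\in K$ are close and remain unordered for all $t\ge 0$, they are forward asymptotic; and if $x_3-x_4\notin\pm C$, the pair is negatively distal. The mechanism is that an eternally unordered difference must satisfy $l_\zeta(x_3-x_4)=0$ by Proposition~\ref{prop ES-contin}(iv), placing it in the complementary bundle where the gap (iii) forces decay, with item (v) enabling the nonlinear bootstrap without $C^{1,\alpha}$. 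Your formulation---ordered pairs aligning with the principal direction and growing subexponentially---is the complementary picture, but subexponential growth of ordered differences yields neither proximality nor distality and does not drive the argument.

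This matters concretely in your almost $1$-$1$ step. You invoke Proposition~\ref{Prop residual set unordered}(ii) for ``proximal $\subset$ ordered'', but that proposition states the \emph{opposite} inclusion $O(K)\subset P(K)$. The hard inclusion $P(K)\subset O(K)$ is precisely what Proposition~\ref{Prop substitute finite} delivers (unordered $\Rightarrow$ negatively distal $\Rightarrow$ not proximal, via Proposition~\ref{prop P=P+=P-} once $P(K)$ is known to be an equivalence relation). Without $P(K)=O(K)$ you have neither the closedness of $P(K)$ (the paper obtains it from $O(K)$ being manifestly closed) nor the singleton fibers of $p^{*}$ over $Y_0$. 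Likewise, finiteness of $N$ in the paper comes from applying Proposition~\ref{Prop substitute infty} to an accumulating unordered pair over some $y_0\in Y_0$. The fix is to reformulate your rigidity step around unordered pairs and to establish $P(K)=O(K)$ before quotienting.
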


\begin{rmk}\label{remark3.0}
Under the assumption that $u$ is $C^{1,\alpha}$ in $x$, Shen and Yi \cite[PartII, Theorem 4.5]{ShYi98} proved that a linearly stable minimal set must be almost automorphic. As we mentioned in the introduction, the approach in \cite{ShYi98} is based on the idea and technique of construction of invariant measurable families of submanifolds in the so called Pesin's Theory (see \cite{Pesin76}). So, the regularity of $\alpha$-H\"{o}lder continuity of the $x$-derivative of $u$ cannot be dropped in \cite{ShYi98}. With the help of the exponential separation on $K\times K$ with the ``internal growth control'' property along the principal bundles and a time-discretization technique, we succeed in reducing the regularity.
\end{rmk}
\vskip 2mm

In the following, we will focus on the proof of Theorem \ref{Theorem main th}. Before we proceed further, we give the following two crucial propositons:

\begin{prop}\label{Prop substitute infty}
Let $K$ be as in Theorem \ref{Theorem main th}. Then there is a $\delta_{0}>0$ such that if $(x_{3},\tilde{y}),(x_{4},\tilde{y})\in K$ satisfies $\|x_{3}-x_{4}\|<\delta_{0}$ and $u(x_{3},\tilde{y},t),u(x_{4},\tilde{y},t)$ are not ordered (that is, $u(x_{3},\tilde{y},t)-u(x_{4},\tilde{y},t)\notin\pm C$) for all $t\geq0$, then
\begin{equation}\label{infty estimate}
\|u(x_{3},\tilde{y},t)-u(x_{4},\tilde{y},t)\|\to 0, \text{ as } t\to+\infty.
\end{equation}
\end{prop}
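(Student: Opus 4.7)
The core idea is to linearize the finite-time map via a hybrid cocycle on $K\times_Y K$ and then apply the exponential separation theorem together with linear stability. Concretely, with $\tau>0$ as in hypothesis (i), I first introduce the base space $E:=\{((x_1,y),(x_2,y))\in K\times K\}$ and the homeomorphism $F:E\to E$, $F((x_1,y),(x_2,y))=((u(x_1,y,\tau),y\cdot\tau),(u(x_2,y,\tau),y\cdot\tau))$ (well-defined and invertible thanks to the flow extension on $K$). On $E$, I define the continuous family of operators
\[
T_{((x_1,y),(x_2,y))}:=\int_{0}^{1}u_{x}\bigl(x_2+s(x_1-x_2),\,y,\,\tau\bigr)\,ds,
\]
which lies in $L(X)$, is strongly positive (integral of strongly positive operators) and compact (integral of compact operators from hypothesis (i)). By the fundamental theorem of calculus, $T_{\xi}(x_1-x_2)=u(x_1,y,\tau)-u(x_2,y,\tau)$, so the cocycle identity $T^{n}_{\xi}(x_1-x_2)=u(x_1,y,n\tau)-u(x_2,y,n\tau)$ holds for $\xi=((x_1,y),(x_2,y))\in E$.

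Next, I apply Proposition \ref{prop ES-contin} to $(F,T)$ to obtain principal bundles $v_{\xi},l_{\xi}$ and constants $M>0$, $\gamma\in(0,1)$. For $\xi_{0}:=((x_3,\tilde y),(x_4,\tilde y))$, the hypothesis that $u(x_3,\tilde y,t)-u(x_4,\tilde y,t)\notin\pm C$ for all $t\ge0$ forces $l_{\xi_{0}}(x_3-x_4)=0$: otherwise property (iv) would place $T^{n}_{\xi_{0}}(x_3-x_4)=u(x_3,\tilde y,n\tau)-u(x_4,\tilde y,n\tau)$ in $\pm\operatorname{Int}C$ for large $n$, contradicting the assumption. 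Consequently, property (iii) gives
\[
\bigl\|u(x_3,\tilde y,n\tau)-u(x_4,\tilde y,n\tau)\bigr\|\;\le\;M\gamma^{n}\bigl\|T^{n}_{\xi_{0}}v_{\xi_{0}}\bigr\|\,\|x_3-x_4\|.
\]

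To control $\|T^{n}_{\xi_{0}}v_{\xi_{0}}\|$, I compare $\xi_{0}$ with its diagonal companion $\eta_{0}:=((x_3,\tilde y),(x_3,\tilde y))\in E$; note that $T_{\eta}=u_{x}(x,\tilde y,\tau)$ on the diagonal, so $T^{n}_{\eta_{0}}=u_{x}(x_3,\tilde y,n\tau)$, and linear stability together with Proposition \ref{Prop norm growth control} yields $\|T^{n}_{\eta_{0}}v_{\eta_{0}}\|\le C_{\epsilon'}e^{\epsilon' n\tau}$ for any $\epsilon'>0$. Fix $\epsilon,\epsilon'>0$ so small that $r:=\gamma(1+\epsilon)e^{\epsilon'\tau}<1$, take $\delta_{1}$ from Proposition \ref{prop ES-contin}(v) for this $\epsilon$, and then choose $\delta_{0}\in(0,\delta_{1})$ with $MC_{\epsilon'}\delta_{0}<\delta_{1}$. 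A bootstrap/induction on $n$ shows that whenever $\|x_3-x_4\|<\delta_{0}$, the discrete-time differences $a_{n}:=\|u(x_3,\tilde y,n\tau)-u(x_4,\tilde y,n\tau)\|$ stay below $\delta_{1}$: indeed, as long as $a_{i}<\delta_{1}$ for $0\le i\le n$, the internal growth control (v) gives $\|T^{n}_{\xi_{0}}v_{\xi_{0}}\|\le(1+\epsilon)^{n}\|T^{n}_{\eta_{0}}v_{\eta_{0}}\|$, combined with the estimate above this yields $a_{n}\le MC_{\epsilon'}r^{n}\|x_3-x_4\|<\delta_{1}$, closing the induction and simultaneously forcing $a_{n}\to0$.

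Finally, the continuous-time statement follows by noting that for $t\in[n\tau,(n+1)\tau]$ another hybrid expansion applied to the compact set $\hat K$ bounds $\|u(x_3,\tilde y,t)-u(x_4,\tilde y,t)\|$ by a uniform constant times $a_{n}$, which tends to $0$. The expected main obstacle is making the bootstrap rigorous: the internal growth control in Proposition \ref{prop ES-contin}(v) demands the a priori closeness $d_{E}(F^{i}\xi_{0},F^{i}\eta_{0})<\delta_{1}$ throughout $0\le i\le m$, which is precisely the conclusion we are trying to establish, so the quantitative choice of $\delta_{0}$ relative to $\delta_{1},M,C_{\epsilon'},r$ must be made carefully, and one has to treat the (mild) strict/non-strict inequality gap in (v) by a small continuity perturbation.
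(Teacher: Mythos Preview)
Your proposal is correct and follows essentially the same route as the paper: the hybrid cocycle $T$ on the fiber product $K\times_Y K$, exponential separation, the observation $l_{\xi_0}(x_3-x_4)=0$ via (iv), and comparison with the diagonal orbit through internal growth control (v) together with linear stability. The only cosmetic difference is in the bootstrap---the paper works in blocks of length $n_0$ (chosen so that $C_{\epsilon_0}Mr^{n_0}<1$) and selects $\delta_0$ via uniform continuity of $F$ over $n_0$ steps, whereas you run a one-step induction with $\delta_0$ chosen so that $MC_{\epsilon'}\delta_0<\delta_1$; both devices close the same loop, and your remark that the proof of (v) actually only needs closeness at steps $0,\dots,i-1$ to control step $i$ is exactly what resolves the apparent circularity.
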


\begin{proof}
We write
$$K_{1}=\{((x_{1},y),(x_{2},y)):\ (x_{1},y),(x_{2},y)\in K\},$$
on which the metric is defined as
$$d_{K_{1}}(((x_{1},y),(x_{2},y)),((x_{1}^{\prime},y^{\prime}),(x_{2}^{\prime},y^{\prime})))=\sqrt{(d_{K}((x_{1},y),(x_{1}^{\prime},y^{\prime})))^{2}+(d_{K}((x_{2},y),(x_{2}^{\prime},y^{\prime})))^{2}},$$
for $((x_{1},y),(x_{2},y)),((x_{1}^{\prime},y^{\prime}),(x_{2}^{\prime},y^{\prime}))\in K_{1}$, where $d_{K}((x_{i},y),(x_{i}^{\prime},y^{\prime}))=\sqrt{\|x_{i}-x_{i}^{\prime}\|^{2}+d_{Y}(y,y^{\prime})^{2}}$, $i=1,2$.
Clearly, $(K_{1},d_{K_{1}})$ is a compact metric space. We define the continuous  map
$$F_{1}:K_{1}\to K_{1};\text{  }((x_{1},y),(x_{2},y))\mapsto F_{1}((x_{1},y),(x_{2},y))\triangleq(\Pi(x_{1},y,\tau),\Pi(x_{2},y,\tau)),$$
for any $((x_{1},y),(x_{2},y))\in K_{1}.$ Since $K$ admits a flow extension, $F_{1}$ is a homeomorphism.
Define the bundle map $T$ as a hybrid function as:
$$T_{((x_{1},y),(x_{2},y))}=\int_0^1{u_{x}(sx_{1}+(1-s)x_{2},y,\tau)ds},\text{  }((x_{1},y),(x_{2},y))\in K_{1}.$$
Recall that $u_{x}(x,y,\tau)$ is strongly positive and continuous in $(x,y)\in\hat{K}$. Then, for each $((x_{1},y),(x_{2},y))\in K_{1}$, $T_{((x_{1},y),(x_{2},y))}$ is a strongly positive linear operator on $X$ and $T_{((x_{1},y),(x_{2},y))}$ continuously depends on $((x_{1},y),(x_{2},y))\in K_{1}$. Moreover, together with the fact that $u_{x}(x,y,\tau)$ is compact for all $(x,y)\in\hat{K}$, we have for each $((x_{1},y),(x_{2},y))\in K_{1}$, $T_{((x_{1},y),(x_{2},y))}$ is a compact operator on $X$ . Furthermore, one can obtain
\begin{equation}\label{cocycle-1}
T_{((x_{1},y),(x_{1},y))}^{n}=u_{x}(x_{1},y,n\tau)
\end{equation}
and
\begin{equation}\label{cocycle-2}
T_{((x_{1},y),(x_{2},y))}^{n}(x_{1}-x_{2})=u(x_{1},y,n\tau)-u(x_{2},y,n\tau),
\end{equation}
for any $((x_{1},y),(x_{2},y))\in K_{1}$ and $n\in\mathbb{N}$. Here, $T_{((x_{1},y),(x_{2},y))}^{n}=T_{F_{1}^{n-1}((x_{1},y),(x_{2},y))}\circ\cdots\circ T_{F_{1}((x_{1},y),(x_{2},y))}\circ T_{((x_{1},y),(x_{2},y))}$. In fact, (\ref{cocycle-1}) is direct from the co-cycle property of $\Pi$. While, the definition of $T_{((x_{1},y),(x_{2},y))}$ entails that
\begin{equation*}
T_{((x_{1},y),(x_{2},y))}(x_{1}-x_{2})=u(x_{1},y,\tau)-u(x_{2},y,\tau),
\end{equation*}
which implies (\ref{cocycle-2}) inductively.

Now, for the bundle map $(F_{1},T)$ on $K_{1}\times X$, we utilize Proposition \ref{prop ES-contin} to obtain the constants $M>0$, $0<\gamma<1$ and vectors $l_{((x_{1},y),(x_{2},y))}\in C_{s}^{*}$, $v_{((x_{1},y),(x_{2},y))}\in{\rm Int}C$ for any $((x_{1},y),(x_{2},y))\in K_{1}$, such that properties (i)-(v) in Proposition \ref{prop ES-contin} hold.

Due to the assumption in this Proposition, (\ref{cocycle-2}) entails that
\begin{equation*}
T_{((x_{3},\tilde{y}),(x_{4},\tilde{y}))}^{n}(x_{3}-x_{4})=u(x_{3},\tilde{y},n\tau)-u(x_{4},\tilde{y},n\tau)\notin\pm C,
\end{equation*}
for any $n\geq1$. Together with Proposition \ref{prop ES-contin}(iv), this implies that
\begin{equation}\label{l(x3,x4)(x3-x4)=0}
l_{((x_{3},\tilde{y}),(x_{4},\tilde{y}))}(x_{3}-x_{4})=0.
\end{equation}
Choose an $\epsilon_{0}>0$ so small that $\gamma e^{\epsilon_{0}\tau}<1$. Since $K$ is linearly stable, for such $\epsilon_{0}$, it follows from Proposition \ref{Prop norm growth control} that there is a $C_{\epsilon_{0}}>0$ such that
\begin{equation}\label{epsilon 0}
\|u_{x}(x,y,t)\|\leq C_{\epsilon_{0}}e^{\epsilon_{0}t}, \ \ \text{ for all } t\geq0 \text{ and } (x,y)\in K.
\end{equation}
We further choose an $\epsilon>0$ so small that
\begin{equation}\label{gamma,epsilon0,epsilon}
\gamma e^{\epsilon_{0}\tau}(1+\epsilon)<1.
\end{equation}
For such $\epsilon>0$, by Proposition \ref{prop ES-contin}(v), there exists a constant $\delta_{1}>0$ such that the estimate (\ref{3.1}) holds.
Let an integer $n_{0}\geq1$ be such that
\begin{equation}\label{the choice of n0}
C_{\epsilon_{0}}M(\gamma e^{\epsilon_{0}\tau}(1+\epsilon))^{n_{0}}<1,
\end{equation}
where $M$ is from the estimate (\ref{3.2}) in Proposition \ref{prop ES-contin}(iii).

Due to the continuity of $F_{1}$ on $K_{1}$, one can find some $\delta_{0}>0$ so small that
\begin{equation}\label{the choice of delta0}
d_{K_{1}}(F_{1}^{i}((x_{1},y),(x_{2},y)),F_{1}^{i}((x_{1}^{\prime},y^{\prime}),(x_{2}^{\prime},y^{\prime})))<\delta_{1},\text{ for any }0\leq i\leq n_{0},
\end{equation}
whenever $((x_{1},y),(x_{2},y)), ((x_{1}^{\prime},y^{\prime}),(x_{2}^{\prime},y^{\prime}))\in K_{1}$ with $d_{K_{1}}(((x_{1},y),(x_{2},y)),((x_{1}^{\prime},y^{\prime}),(x_{2}^{\prime},y^{\prime})))<\delta_{0}$.

Now, we \emph{claim} that, \emph{for any $(x_{3},\tilde{y}),(x_{4},\tilde{y})\in K$ with $\|x_{3}-x_{4}\|<\delta_{0}$ and $u(x_{3},\tilde{y},t),u(x_{4},\tilde{y},t)$ unordered for all $t\geq0$,}
\begin{equation}\label{discrete infty estimate}
\|u(x_{3},\tilde{y},i\tau)-u(x_{4},\tilde{y},i\tau)\|\leq C_{\epsilon_{0}}M(\gamma e^{\epsilon_{0}\tau}(1+\epsilon))^{i}\|x_{3}-x_{4}\|,\ \  \emph{ for any } i\geq1.
\end{equation}
In order to prove the claim, we first prove (\ref{discrete infty estimate}) for $1\leq i\leq n_{0}$. By taking $((x_{1},y),(x_{2},y))=((x_{3},\tilde{y}),(x_{4},\tilde{y}))$ and $((x_{1}^{\prime},y^{\prime}),(x_{2}^{\prime},y^{\prime}))=((x_{3},\tilde{y}),(x_{3},\tilde{y}))$ in (\ref{the choice of delta0}), we have
\begin{equation}\label{0-n0initial}
d_{K_{1}}(F_{1}^{i}((x_{3},\tilde{y}),(x_{4},\tilde{y})),F_{1}^{i}((x_{3},\tilde{y}),(x_{3},\tilde{y})))<\delta_{1},\quad 0\leq i\leq n_{0}.
\end{equation}
By virtue of (\ref{3.1}) in Proposition \ref{prop ES-contin}(v), one has
\begin{equation}\label{1+epsilon m0-n0}
\|T_{((x_{3},\tilde{y}),(x_{4},\tilde{y}))}^{i}v_{((x_{3},\tilde{y}),(x_{4},\tilde{y}))}\|\leq (1+\epsilon)^{i}\|T_{((x_{3},\tilde{y}),(x_{3},\tilde{y}))}^{i}v_{((x_{3},\tilde{y}),(x_{3},\tilde{y}))}\|,\quad 1\leq i\leq n_{0}.
\end{equation}
Therefore, for $1\leq i \leq n_{0}$,
\begin{alignat*}{2}\label{m0-n0}
\|u(x_{3},\tilde{y},i\tau)-u(x_{4},\tilde{y},i\tau)\|&\quad\overset{(\ref{cocycle-2})}{=}\|T_{((x_{3},\tilde{y}),(x_{4},\tilde{y}))}^{i}(x_{3}-x_{4})\|\\
&\overset{(\ref{l(x3,x4)(x3-x4)=0})+(\ref{3.2})}{\leq}M{\gamma}^{i}\|T_{((x_{3},\tilde{y}),(x_{4},\tilde{y}))}^{i}v_{((x_{3},\tilde{y}),(x_{4},\tilde{y}))}\|\cdot
\|x_{3}-x_{4}\|\\
&\ \ \overset{(\ref{1+epsilon m0-n0})}{\leq} M(\gamma(1+\epsilon))^{i}\|T_{((x_{3},\tilde{y}),(x_{3},\tilde{y}))}^{i}v_{((x_{3},\tilde{y}),(x_{3},\tilde{y}))}\|\cdot\|x_{3}-x_{4}\|\\
&\quad\overset{(\ref{cocycle-1})}{\leq}M(\gamma(1+\epsilon))^{i}\|u_{x}(x_{3},\tilde{y},i\tau)\|\cdot\|x_{3}-x_{4}\|\\
&\quad\overset{(\ref{epsilon 0})}{\leq}C_{\epsilon_{0}}M(\gamma e^{\epsilon_{0}\tau}(1+\epsilon))^{i}\|x_{3}-x_{4}\|.\tag{3.12}
\end{alignat*}
Next, we will prove (\ref{discrete infty estimate}) for $1\leq i\leq 2n_{0}$.
Choose $i=n_{0}$ in (\ref{m0-n0}). Then, together with (\ref{the choice of n0}), $\|u(x_{3},\tilde{y},n_{0}\tau)-u(x_{4},\tilde{y},n_{0}\tau)\|<\|x_{3}-x_{4}\|<\delta_{0}$. Hence, $$d_{K_{1}}(F_{1}^{n_{0}}((x_{3},\tilde{y}),(x_{4},\tilde{y})),F_{1}^{n_{0}}((x_{3},\tilde{y}),(x_{3},\tilde{y})))<\delta_{0}.$$
So, we again take
$$((x_{1},y),(x_{2},y))=F_{1}^{n_{0}}((x_{3},\tilde{y}),(x_{4},\tilde{y}))$$
and
$$((x_{1}^{\prime},y^{\prime}),(x_{2}^{\prime},y^{\prime}))=F_{1}^{n_{0}}((x_{3},\tilde{y}),(x_{3},\tilde{y})))$$
in (\ref{the choice of delta0}), and obtain $d_{K_{1}}(F_{1}^{i}((x_{3},\tilde{y}),(x_{4},\tilde{y})),F_{1}^{i}((x_{3},\tilde{y}),(x_{3},\tilde{y})))<\delta_{1}$, for $n_{0}\leq i\leq 2n_{0}$. Together with (\ref{0-n0initial}), we have $d_{K_{1}}(F_{1}^{i}((x_{3},\tilde{y}),(x_{4},\tilde{y})),F_{1}^{i}((x_{3},\tilde{y}),(x_{3},\tilde{y})))<\delta_{1}$ for any $0\leq i\leq 2n_{0}$. Again, by (\ref{3.1}) in Proposition \ref{prop ES-contin}(v), one has
\begin{equation*}\label{1+epsilon m0-2n0}
\|T_{((x_{3},\tilde{y}),(x_{4},\tilde{y}))}^{i}v_{((x_{3},\tilde{y}),(x_{4},\tilde{y}))}\|\leq (1+\epsilon)^{i}\|T_{((x_{3},\tilde{y}),(x_{3},\tilde{y}))}^{i}v_{((x_{3},\tilde{y}),(x_{3},\tilde{y}))}\|,\quad 1\leq i\leq 2n_{0}.\tag{3.13}
\end{equation*}
Therefore, for $1\leq i\leq 2n_{0}$,
\begin{alignat*}{2}
\|u(x_{3},\tilde{y},i\tau)-u(x_{4},\tilde{y},i\tau)\|&\ \,=\|T_{((x_{3},\tilde{y}),(x_{4},\tilde{y}))}^{i}(x_{3}-x_{4})\|\\
&\ \,\leq M{\gamma}^{i}\|T_{((x_{3},\tilde{y}),(x_{4},\tilde{y}))}^{i}v_{((x_{3},\tilde{y}),(x_{4},\tilde{y}))}\|\cdot
\|x_{3}-x_{4}\|\\
&\overset{(\ref{1+epsilon m0-2n0})}{\leq} M(\gamma(1+\epsilon))^{i}\|T_{((x_{3},\tilde{y}),(x_{3},\tilde{y}))}^{i}v_{((x_{3},\tilde{y}),(x_{3},\tilde{y}))}\|\cdot\|x_{3}-x_{4}\|\\
&\ \,\leq M(\gamma(1+\epsilon))^{i}\|u_{x}(x_{3},\tilde{y},i\tau)\|\cdot\|x_{3}-x_{4}\|\\
&\ \,\leq C_{\epsilon_{0}}M(\gamma e^{\epsilon_{0}\tau}(1+\epsilon))^{i}\|x_{3}-x_{4}\|.
\end{alignat*}
Inductively, we can repeat the arguments and prove that (\ref{discrete infty estimate}) is satisfied for all $i\geq1$. Thus, we have proved  the claim.

By virtue of the claim, we obtain $\|u(x_{3},\tilde{y},i\tau)-u(x_{4},\tilde{y},i\tau)\|$ $\to 0$, as $i\to+\infty$. Now, we show that $\|u(x_{3},\tilde{y},t)-u(x_{4},\tilde{y},t)\|\to 0$ as $t\to+\infty$. To this end, for any $\epsilon^{\prime}>0$, it follows from the uniform continuity of $u$ on $K\times [0,\tau]$ that, there exists $\delta^{\prime}>0$ such that $\|u(x_{1},y_{1},t_{1})-u(x_{2},y_{2},t_{2})\|<\epsilon^{\prime}$, for $(x_{1},y_{1},t_{1}),(x_{2},y_{2},t_{2})\in K\times [0,\tau]$ with $d_{K\times [0,\tau]}((x_{1},y_{1},t_{1}),(x_{2},y_{2},t_{2}))<\delta^{\prime}$. For any $t>0$, write $t=l\tau+\alpha$, $l\in\mathbb{N}$ and $\alpha\in[0,\tau]$. It follows from the claim that, there exists an integer $N>0$ such that $\|u(x_{3},\tilde{y},i\tau)-u(x_{4},\tilde{y},i\tau)\|<\delta^{\prime}$, for any $i\geq N$. Therefore, $\|u(x_{3},\tilde{y},t)-u(x_{4},\tilde{y},t)\|$=$\|u(u(x_{3},\tilde{y},l\tau),\tilde{y}\cdot l\tau,\alpha)
-u(u(x_{4},\tilde{y},l\tau),\tilde{y}\cdot l\tau,\alpha)\|<\epsilon^{\prime}$, for any $t\geq N\tau$. Thus, we have obtained (\ref{infty estimate}), which completes the proof.
\end{proof}

\begin{prop}\label{Prop substitute finite}
Let $K$ be as in Theorem \ref{Theorem main th}. If $(x_{3},\tilde{y}),(x_{4},\tilde{y})\in K$ satisfying $x_{3}-x_{4}\notin\pm C$, then the pair $(x_{3},\tilde{y})$ and $(x_{4},\tilde{y})$ are negatively distal.
\end{prop}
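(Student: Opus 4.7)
The plan is to argue by contradiction. Suppose $(x_3,\tilde{y})$ and $(x_4,\tilde{y})$ are not negatively distal, so there exist $t_n\in\mathbb{R}^-$ with $\|u(x_3,\tilde{y},t_n)-u(x_4,\tilde{y},t_n)\|\to 0$. If $\{t_n\}$ were bounded, a subsequence would give $t_n\to t_*\leq 0$ with $u(x_3,\tilde{y},t_*)=u(x_4,\tilde{y},t_*)$, and uniqueness of the backward orbit on $K$ would force $x_3=x_4$, contradicting $x_3-x_4\notin\pm C$. So I may assume $t_n\to-\infty$. Write $\hat{x}_i^n:=u(x_i,\tilde{y},t_n)$ and $\hat{y}_n:=\tilde{y}\cdot t_n$, so $\hat{x}_3^n-\hat{x}_4^n\to 0$.

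The first ingredient is a \emph{backward preservation of unorderedness}: if $u(x_3,\tilde{y},t)-u(x_4,\tilde{y},t)\in\pm C$ for some $t<0$, then forward strong monotonicity over $[t,0]$ would place $x_3-x_4\in\pm\mathrm{Int}\,C$, a contradiction. Hence $u(x_3,\tilde{y},t)-u(x_4,\tilde{y},t)\notin\pm C$ for every $t\leq 0$. Adopt verbatim the bundle map $T$ on $K_1$ from the proof of Proposition~\ref{Prop substitute infty}, set $\zeta_n:=((\hat{x}_3^n,\hat{y}_n),(\hat{x}_4^n,\hat{y}_n))$ and $i_n:=\lfloor -t_n/\tau\rfloor$, and observe that the cocycle identity (\ref{cocycle-2}) yields
\[
T_{\zeta_n}^i(\hat{x}_3^n-\hat{x}_4^n)=u(x_3,\tilde{y},t_n+i\tau)-u(x_4,\tilde{y},t_n+i\tau)\notin\pm C,\qquad 1\leq i\leq i_n.
\]
Decompose $\hat{x}_3^n-\hat{x}_4^n=\alpha_n v_{\zeta_n}+w_n$ with $l_{\zeta_n}(w_n)=0$. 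Since $\{v_\zeta\}_{\zeta\in K_1}$ is a compact subset of $\mathrm{Int}\,C$, the quantity $d_*:=\inf_{\zeta\in K_1}d(v_\zeta,\partial C)>0$; combining this with Proposition~\ref{prop ES-contin}(iii) (applied to $w_n/\|w_n\|$) and the non-membership $T_{\zeta_n}^{i_n}(\hat{x}_3^n-\hat{x}_4^n)\notin\pm\mathrm{Int}\,C$ forces the quantitative threshold estimate
\[
|\alpha_n|\leq \frac{M}{d_*}\,\gamma^{i_n}\|w_n\|,
\]
which is the key substitute for the identity $l_\zeta(x_3-x_4)=0$ that was used in the proof of Proposition~\ref{Prop substitute infty}.

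Fix $\epsilon_0,\epsilon>0$ small with $\rho:=\gamma(1+\epsilon)e^{\epsilon_0\tau}<1$. Exactly the bootstrap in the proof of Proposition~\ref{Prop substitute infty}, in stages of length $n_0$ chosen so that $2MC_{\epsilon_0}(1+1/d_*)\rho^{n_0}<1$, then propagates the $\delta_1$-closeness $d_{K_1}(F_1^j\zeta_n,F_1^j\zeta_n^{\mathrm{diag}})<\delta_1$ throughout $0\leq j\leq i_n$, so that Proposition~\ref{prop ES-contin}(v) together with Proposition~\ref{Prop norm growth control} yields $\|T_{\zeta_n}^iv_{\zeta_n}\|\leq(1+\epsilon)^iC_{\epsilon_0}e^{\epsilon_0 i\tau}$ for $1\leq i\leq i_n$. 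Combining this with Proposition~\ref{prop ES-contin}(iii) and the threshold estimate gives
\[
\|T_{\zeta_n}^{i_n}(\hat{x}_3^n-\hat{x}_4^n)\|\leq\bigl(|\alpha_n|+M\gamma^{i_n}\|w_n\|\bigr)\|T_{\zeta_n}^{i_n}v_{\zeta_n}\|\leq MC_{\epsilon_0}(1+1/d_*)\|w_n\|\rho^{i_n}\longrightarrow 0.
\]
On the other hand, $\beta_n:=t_n+i_n\tau\in[-\tau,0]$ is bounded; extracting a subsequence with $\beta_n\to\beta_*\in[-\tau,0]$, continuity of $\Pi$ identifies the left-hand side, in the limit, with $u(x_3,\tilde{y},\beta_*)-u(x_4,\tilde{y},\beta_*)$, which is nonzero by backward preservation. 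This contradiction completes the proof. The main obstacle is that Proposition~\ref{Prop substitute infty} crucially used unorderedness on the full forward half-line to obtain $l_\zeta(x_3-x_4)=0$ exactly; here the forward orbit of $(x_3,\tilde{y}),(x_4,\tilde{y})$ may well become ordered, so $\alpha_n$ need not vanish, and the threshold estimate---exploiting the uniform positivity of $d_*$---is what keeps the exponential-separation argument intact.
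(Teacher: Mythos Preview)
Your proof is correct and follows essentially the same route as the paper's. Both argue by contradiction from $\|u(x_3,\tilde y,t_n)-u(x_4,\tilde y,t_n)\|\to0$, decompose the difference along the principal bundle of the hybrid cocycle $T$ on $K_1$, derive the same threshold estimate $|\alpha_n|\le (M/d_*)\gamma^{i_n}\|w_n\|$ (the paper's $\epsilon_1$ is your $d_*$) from unorderedness at a time in $[-\tau,0]$, and run the identical $n_0$-block bootstrap using the internal growth control of Proposition~\ref{prop ES-contin}(v) together with Proposition~\ref{Prop norm growth control}. The only cosmetic difference is in the endgame: the paper shows that for every $\delta>0$ there is $t_\delta\in[-\tau,0]$ with $\|u(x_3,\tilde y,t_\delta)-u(x_4,\tilde y,t_\delta)\|<\delta$ and invokes uniform continuity on $K\times[0,\tau]$ to force $x_3=x_4$, whereas you pass to a subsequence $\beta_n\to\beta_*\in[-\tau,0]$ and use continuity of the flow extension on $K$ to get $u(x_3,\tilde y,\beta_*)=u(x_4,\tilde y,\beta_*)$; both yield the same contradiction.
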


\begin{proof}
Suppose on the contrary that there exists a sequence $t_{n}\to-\infty$ such that $\|u(x_{3},\tilde{y},t_{n})-u(x_{4},\tilde{y},t_{n})\|\to0$ as $t_{n}\to-\infty$. Then, we will obtain a contradiction by showing that $x_{3}=x_{4}$. To this purpose, let $\epsilon>0$ be in (\ref{gamma,epsilon0,epsilon}). For such $\epsilon>0$, let $\delta_{1}>0$ be obtained in Proposition \ref{prop ES-contin}(v) (for the bundle map $(F_{1},T)$ on $K_{1}\times X$).

Now, we \emph{claim} that, \emph{for any $0<\delta<\delta_{1}$, there exists $t_{\delta}\in [-\tau,0]$, such that}
\begin{equation*}\label{x3-x4less than delta}
\|u(x_{3},\tilde{y},t_{\delta})-u(x_{4},\tilde{y},t_{\delta})\|<\delta.\tag{3.14}
\end{equation*}
Before we prove the claim, we show that how it implies that $x_{3}=x_{4}$. Suppose that $x_{3}\neq x_{4}$. Let $0<\epsilon^{\prime}=\|x_{3}-x_{4}\|$. Noticing that $u$ is uniformly continuous on $K\times [0,\tau]$, there exists $\delta^{\prime}>0$ such that $\|u(x_{1},y_{1},t_{1})-u(x_{2},y_{2},t_{2})\|<\epsilon^{\prime}$, whenever $(x_{1},y_{1},t_{1}),(x_{2},y_{2},t_{2})\in K\times [0,\tau]$ with $d_{K\times [0,\tau]}((x_{1},y_{1},t_{1}),(x_{2},y_{2},t_{2}))<\delta^{\prime}$. For any $0<\delta<\min\{\delta^{\prime},\delta_{1}\}$, it follows from the claim that there exists $t_{\delta}\in [-\tau,0]$, such that $\|u(x_{3},\tilde{y},t_{\delta})-u(x_{4},\tilde{y},t_{\delta})\|<\delta$. This implies $\|x_{3}-x_{4}\|=\|u(u(x_{3},\tilde{y},t_{\delta}),y\cdot t_{\delta},-t_{\delta})-u(u(x_{4},\tilde{y},t_{\delta}),y\cdot t_{\delta},-t_{\delta})\|<\epsilon^{\prime}$, which contradicts $\|x_{3}-x_{4}\|=\epsilon^{\prime}$.

Now, we focus on the proof of the claim. By virtue of Proposition \ref{prop ES-contin}(i), the set $V_{K_{1}}\triangleq\{v_{((x_{1},y),(x_{2},y))}:((x_{1},y),(x_{2},y))\in K_{1}\}\subset{\rm Int}C$ is compact. So, there exists $\epsilon_{1}>0$ such that
 \begin{equation*}\label{epsilon 1}
\{v\in X:\ d(v,V_{K_{1}})<\epsilon_{1}\}\subset{\rm Int}C,\tag{3.15}
\end{equation*}
where $d(v,V_{K_{1}})=\mathop{\inf}\limits_{w \in V_{K_{1}}}d(v,w)$. We decompose $u(x_{3},\tilde{y},t_{n})-u(x_{4},\tilde{y},t_{n})$ as
\begin{equation*}\label{decompose un}
u(x_{3},\tilde{y},t_{n})-u(x_{4},\tilde{y},t_{n})=c_{n}v_{(\Pi(x_{3},\tilde{y},t_{n}),\Pi(x_{4},\tilde{y},t_{n}))}+w_{n},\tag{3.16}
\end{equation*}
where $c_{n}=\frac{l_{(\Pi(x_{3},\tilde{y},t_{n}),\Pi(x_{4},\tilde{y},t_{n}))}(u(x_{3},\tilde{y},t_{n})-u(x_{4},\tilde{y},t_{n}))}
{l_{(\Pi(x_{3},\tilde{y},t_{n}),\Pi(x_{4},\tilde{y},t_{n}))}(v_{(\Pi(x_{3},\tilde{y},t_{n}),\Pi(x_{4},\tilde{y},t_{n}))})}$ and $l_{(\Pi(x_{3},\tilde{y},t_{n}),\Pi(x_{4},\tilde{y},t_{n}))}(w_{n})=0$. For each $n\geq1$, we write $-t_{n}=k_{n}\tau+\alpha_{n}$ with $k_{n}\in\mathbb{N}$, $\alpha_{n}\in[0,\tau)$, $n=1,2,\cdots$. We assert that
\begin{equation*}\label{cn control by gamma}
|c_{n}|\leq {\epsilon_{1}}^{-1}M\gamma^{k_{n}}\|w_{n}\|,\quad\text{for}\quad n\geq1,\tag{3.17}
\end{equation*}
where $M$ and $\gamma$ are from (\ref{3.2}) in Proposition \ref{prop ES-contin}(iii).
In fact, if $c_{n}=0$, then we've done. If $c_{n}\neq0$, then by (\ref{cocycle-2}) and (\ref{decompose un}), we write
\begin{alignat*}{2}
u(x_{3},\tilde{y},-\alpha_{n})-u(x_{4},\tilde{y},-\alpha_{n})
=\,&T_{(\Pi(x_{3},\tilde{y},t_{n}),\Pi(x_{4},\tilde{y},t_{n}))}^{k_{n}}(u(x_{3},\tilde{y},t_{n})-u(x_{4},\tilde{y},t_{n}))\\
=\,&T_{(\Pi(x_{3},\tilde{y},t_{n}),\Pi(x_{4},\tilde{y},t_{n}))}^{k_{n}}(c_{n}v_{(\Pi(x_{3},\tilde{y},t_{n}),\Pi(x_{4},\tilde{y},t_{n}))}+w_{n}).
\end{alignat*}
Noticing that $x_{3}-x_{4}\notin\pm C$, one has $u(x_{3},\tilde{y},-\alpha_{n})-u(x_{4},\tilde{y},-\alpha_{n})\notin\pm C$.
So,
$$\frac{T_{(\Pi(x_{3},\tilde{y},t_{n}),\Pi(x_{4},\tilde{y},t_{n}))}^{k_{n}}v_{(\Pi(x_{3},\tilde{y},t_{n}),\Pi(x_{4},\tilde{y},t_{n}))}}
{\|T_{(\Pi(x_{3},\tilde{y},t_{n}),\Pi(x_{4},\tilde{y},t_{n}))}^{k_{n}}v_{(\Pi(x_{3},\tilde{y},t_{n}),\Pi(x_{4},\tilde{y},t_{n}))}\|}+
\frac{T_{(\Pi(x_{3},\tilde{y},t_{n}),\Pi(x_{4},\tilde{y},t_{n}))}^{k_{n}}w_{n}}
{c_{n}\|T_{(\Pi(x_{3},\tilde{y},t_{n}),\Pi(x_{4},\tilde{y},t_{n}))}^{k_{n}}v_{(\Pi(x_{3},\tilde{y},t_{n}),\Pi(x_{4},\tilde{y},t_{n}))}\|}\notin\pm C.$$
Since
$T_{(\Pi(x_{3},\tilde{y},t_{n}),\Pi(x_{4},\tilde{y},t_{n}))}^{k_{n}}v_{(\Pi(x_{3},\tilde{y},t_{n}),\Pi(x_{4},\tilde{y},t_{n}))}\in V_{K_{1}}$,
(\ref{epsilon 1}) implies that
$$|c_{n}|\leq \frac{\|T_{(\Pi(x_{3},\tilde{y},t_{n}),\Pi(x_{4},\tilde{y},t_{n}))}^{k_{n}}w_{n}\|}
{\epsilon_{1}\|T_{(\Pi(x_{3},\tilde{y},t_{n}),\Pi(x_{4},\tilde{y},t_{n}))}^{k_{n}}v_{(\Pi(x_{3},\tilde{y},t_{n}),\Pi(x_{4},\tilde{y},t_{n}))}\|}.$$
Together with (\ref{3.2}) in Proposition \ref{prop ES-contin}(iii), we obtain that $|c_{n}|\leq{\epsilon_{1}}^{-1}M\gamma^{k_{n}}\|w_{n}\|$ for $n\geq1$. Thus, we have proved the assertion of (\ref{cn control by gamma}).

Since $t_{n}\to-\infty$, we can choose an integer $N_{1}>0$ such that $M\gamma^{k_{n}}<\frac{{\epsilon_{1}}}{2}$ for any $n\geq N_{1}$. By (\ref{decompose un})-(\ref{cn control by gamma}),
\begin{equation*}\label{wn control by un-un}
\|u(x_{3},\tilde{y},t_{n})-u(x_{4},\tilde{y},t_{n})\|>\frac{1}{2}\|w_{n}\|,\quad\text{for}\quad n\geq N_{1}.\tag{3.18}
\end{equation*}
Fix an integer $n_{0}\geq1$ such that
\begin{equation*}\label{the choice of n0'}
2\max\{{\epsilon_{1}}^{-1},1\}C_{\epsilon_{0}}M(\gamma e^{\epsilon_{0}\tau}(1+\epsilon))^{n_{0}}<\frac{1}{3}.\tag{3.19}
\end{equation*}
For any $\delta\in(0,\delta_{1})$ in (\ref{x3-x4less than delta}), due to the continuity of $F_{1}$ on $K_{1}$, one can choose $\delta_{0}>0$ so small that
\begin{equation*}\label{the choice of delta0'}
d_{K_{1}}(F_{1}^{i}((x_{1},y),(x_{2},y)),F_{1}^{i}((x_{1}^{\prime},y^{\prime}),(x_{2}^{\prime},y^{\prime})))<\delta,\ \text{ for }0\leq i\leq n_{0},\tag{3.20}
\end{equation*}
whenever $((x_{1},y),(x_{2},y)), ((x_{1}^{\prime},y^{\prime}),(x_{2}^{\prime},y^{\prime}))\in K_{1}$ with $d_{K_{1}}(((x_{1},y),(x_{2},y)),((x_{1}^{\prime},y^{\prime}),(x_{2}^{\prime},y^{\prime})))<\delta_{0}$.

Recall that $\|u(x_{3},\tilde{y},t_{n})-u(x_{4},\tilde{y},t_{n})\|\to0$ as $n\to+\infty$. Then there exists an integer $N_{2}>0$, such that
\begin{equation*}\label{initial less than delta0}
t_{n}<-2n_{0}\tau\quad\text{and}\quad\|u(x_{3},\tilde{y},t_{n})-u(x_{4},\tilde{y},t_{n})\|<\delta_{0},\quad\text{for}\quad n\geq N_{2}.\tag{3.21}
\end{equation*}
In other words, $$d_{K_{1}}((\Pi(x_{3},\tilde{y},t_{n}),\Pi(x_{4},\tilde{y},t_{n})),(\Pi(x_{3},\tilde{y},t_{n}),\Pi(x_{3},\tilde{y},t_{n})))<\delta_{0},\quad\text{ for}\quad n\geq N_{2}.$$
So, by taking in (\ref{the choice of delta0'}) $((x_{1},y),(x_{2},y))=(\Pi(x_{3},\tilde{y},t_{n}),\Pi(x_{4},\tilde{y},t_{n}))$ and $((x_{1}^{\prime},y^{\prime}),(x_{2}^{\prime},y^{\prime}))=(\Pi(x_{3},\tilde{y},t_{n}),\Pi(x_{3},\tilde{y},t_{n}))$
for some $n>\max\{N_{1},N_{2}\}$, we have
\begin{equation*}\label{0-n0initial'}
d_{K_{1}}(F_{1}^{i}(\Pi(x_{3},\tilde{y},t_{n}),\Pi(x_{4},\tilde{y},t_{n})),F_{1}^{i}(\Pi(x_{3},\tilde{y},t_{n}),\Pi(x_{3},\tilde{y},t_{n})))<\delta,\quad 0\leq i\leq n_{0}.\tag{3.22}
\end{equation*}
Together with (\ref{3.1}) in Proposition \ref{prop ES-contin}(v), we obtain
\begin{alignat*}{2}\label{1+epsilon m0-n0'}
&\|T_{(\Pi(x_{3},\tilde{y},t_{n}),\Pi(x_{4},\tilde{y},t_{n}))}^{i}v_{(\Pi(x_{3},\tilde{y},t_{n}),\Pi(x_{4},\tilde{y},t_{n}))}\|
\leq\,&(1+\epsilon)^{i}\|T_{(\Pi(x_{3},\tilde{y},t_{n}),\Pi(x_{3},\tilde{y},t_{n}))}^{i}v_{(\Pi(x_{3},\tilde{y},t_{n}),\Pi(x_{3},\tilde{y},t_{n}))}\|, \tag{3.23}
\end{alignat*}
for $1\leq i\leq n_{0}$.
Therefore, for $1\leq i \leq n_{0}$,
\begin{alignat*}{2}\label{m0-n0'}
&\|u(x_{3},\tilde{y},t_{n}+i\tau)-u(x_{4},\tilde{y},t_{n}+i\tau)\|\\
\overset{(\ref{cocycle-2})}{=}&\|T_{(\Pi(x_{3},\tilde{y},t_{n}),\Pi(x_{4},\tilde{y},t_{n}))}^{i}(u(x_{3},\tilde{y},t_{n})-u(x_{4},\tilde{y},t_{n}))\|\\
\overset{(\ref{decompose un})}{=}&\|T_{(\Pi(x_{3},\tilde{y},t_{n}),\Pi(x_{4},\tilde{y},t_{n}))}^{i}(c_{n}v_{(\Pi(x_{3},\tilde{y},t_{n}),\Pi(x_{4},\tilde{y},t_{n}))}+w_{n})\|\\
\overset{(\ref{3.2})}{\leq}&(|c_{n}|+M\gamma^{i}\|w_{n}\|)\cdot\|T_{(\Pi(x_{3},\tilde{y},t_{n}),\Pi(x_{4},\tilde{y},t_{n}))}^{i}v_{(\Pi(x_{3},\tilde{y},t_{n}),\Pi(x_{4},\tilde{y},t_{n}))}\|\\
\overset{(\ref{cn control by gamma})}{\leq}&({\epsilon_{1}}^{-1}M\gamma^{k_{n}}+M\gamma^{i})\|w_{n}\|\cdot\|T_{(\Pi(x_{3},\tilde{y},t_{n}),\Pi(x_{4},\tilde{y},t_{n}))}^{i}v_{(\Pi(x_{3},\tilde{y},t_{n}),\Pi(x_{4},\tilde{y},t_{n}))}\|\\
\overset{(\ref{1+epsilon m0-n0'})}{\leq}&({\epsilon_{1}}^{-1}M\gamma^{k_{n}}+M\gamma^{i})\|w_{n}\|\cdot(1+\epsilon)^{i}\|T_{(\Pi(x_{3},\tilde{y},t_{n}),\Pi(x_{3},\tilde{y},t_{n}))}^{i}v_{(\Pi(x_{3},\tilde{y},t_{n}),\Pi(x_{3},\tilde{y},t_{n}))}\|\\
\overset{(\ref{cocycle-1})}{\leq}&({\epsilon_{1}}^{-1}M\gamma^{k_{n}}+M\gamma^{i})\|w_{n}\|\cdot(1+\epsilon)^{i}\|u_{x}(\Pi(x_{3},\tilde{y},t_{n}),i\tau)\|\\
\overset{(\ref{wn control by un-un})}{\leq}&2({\epsilon_{1}}^{-1}M\gamma^{k_{n}}+M\gamma^{i})(1+\epsilon)^{i}\|u_{x}(\Pi(x_{3},\tilde{y},t_{n}),i\tau)\|\cdot\|u(x_{3},\tilde{y},t_{n})-u(x_{4},\tilde{y},t_{n})\|\\
\overset{(\ref{epsilon 0})}{\leq}&\left[2{\epsilon_{1}}^{-1}C_{\epsilon_{0}}M\gamma^{k_{n}}(e^{\epsilon_{0}\tau}(1+\epsilon))^{i}+2C_{\epsilon_{0}}M(\gamma e^{\epsilon_{0}\tau}(1+\epsilon))^{i}\right]\cdot\|u(x_{3},\tilde{y},t_{n})-u(x_{4},\tilde{y},t_{n})\|.\tag{3.24}
\end{alignat*}
Choose $i=n_{0}$ in (\ref{m0-n0'}). Then by (\ref{the choice of n0'}), (\ref{initial less than delta0}), we have
\begin{alignat*}{2}
&\|u(x_{3},\tilde{y},t_{n}+n_{0}\tau)-u(x_{4},\tilde{y},t_{n}+n_{0}\tau)\|<\frac{2}{3}\delta_{0},
\end{alignat*}
and hence, $$d_{K_{1}}(F_{1}^{n_{0}}(\Pi(x_{3},\tilde{y},t_{n}),\Pi(x_{4},\tilde{y},t_{n})),F_{1}^{n_{0}}(\Pi(x_{3},\tilde{y},t_{n}),\Pi(x_{3},\tilde{y},t_{n})))<\frac{2}{3}\delta_{0}<\delta_{0},$$
by which we take in (\ref{the choice of delta0'})
$$((x_{1},y),(x_{2},y))=F_{1}^{n_{0}}(\Pi(x_{3},\tilde{y},t_{n}),\Pi(x_{4},\tilde{y},t_{n})),$$
$$((x_{1}^{\prime},y^{\prime}),(x_{2}^{\prime},y^{\prime}))=F_{1}^{n_{0}}(\Pi(x_{3},\tilde{y},t_{n}),\Pi(x_{3},\tilde{y},t_{n})),$$
and obtain
\begin{equation*}
d_{K_{1}}(F_{1}^{i}(\Pi(x_{3},\tilde{y},t_{n}),\Pi(x_{4},\tilde{y},t_{n})),F_{1}^{i}(\Pi(x_{3},\tilde{y},t_{n}),\Pi(x_{3},\tilde{y},t_{n})))<\delta,
\end{equation*}
for $n_{0}\leq i\leq 2n_{0}$. So, together with (\ref{0-n0initial'}), we obtain
\begin{equation*}
d_{K_{1}}(F_{1}^{i}(\Pi(x_{3},\tilde{y},t_{n}),\Pi(x_{4},\tilde{y},t_{n})),F_{1}^{i}(\Pi(x_{3},\tilde{y},t_{n}),\Pi(x_{3},\tilde{y},t_{n})))<\delta,\ \ 0\leq i\leq 2n_{0}.
\end{equation*}
Again, by (\ref{3.1}) in Proposition \ref{prop ES-contin}(v), one has
\begin{alignat*}{2}\label{1+epsilon m0-2n0'}
&\|T_{(\Pi(x_{3},\tilde{y},t_{n}),\Pi(x_{4},\tilde{y},t_{n}))}^{i}v_{(\Pi(x_{3},\tilde{y},t_{n}),\Pi(x_{4},\tilde{y},t_{n}))}\|\\
\leq\,&(1+\epsilon)^{i}\|T_{(\Pi(x_{3},\tilde{y},t_{n}),\Pi(x_{3},\tilde{y},t_{n}))}^{i}v_{(\Pi(x_{3},\tilde{y},t_{n}),\Pi(x_{3},\tilde{y},t_{n}))}\|, \tag{3.25}
\end{alignat*}
for any $1\leq i\leq 2n_{0}$.
Therefore, similarly as the estimates in (\ref{m0-n0'}), we obtain from (\ref{1+epsilon m0-2n0'}) that
\begin{alignat*}{2}\label{m0-2n0'}
&\|u(x_{3},\tilde{y},t_{n}+i\tau)-u(x_{4},\tilde{y},t_{n}+i\tau)\|\\
\leq\,&\left[2{\epsilon_{1}}^{-1}C_{\epsilon_{0}}M\gamma^{k_{n}}(e^{\epsilon_{0}\tau}(1+\epsilon))^{i}+2C_{\epsilon_{0}}M(\gamma e^{\epsilon_{0}\tau}(1+\epsilon))^{i}\right]\cdot\|u(x_{3},\tilde{y},t_{n})-u(x_{4},\tilde{y},t_{n})\|,\tag{3.26}
\end{alignat*}
for any $1\leq i\leq 2n_{0}$.

Therefore, by repeating the same arguments, we obtain that
\begin{alignat*}{2}\label{discrete finite estimate}
&\|u(x_{3},\tilde{y},t_{n}+i\tau)-u(x_{4},\tilde{y},t_{n}+i\tau)\|\\
\leq\,&\left[2{\epsilon_{1}}^{-1}C_{\epsilon_{0}}M\gamma^{k_{n}}(e^{\epsilon_{0}\tau}(1+\epsilon))^{i}+2C_{\epsilon_{0}}M(\gamma e^{\epsilon_{0}\tau}(1+\epsilon))^{i}\right]\cdot\|u(x_{3},\tilde{y},t_{n})-u(x_{4},\tilde{y},t_{n})\|, \tag{3.27}
\end{alignat*}
for all $1\leq i\leq l_{n}\cdot n_{0}$, where the integer $l_{n}$ comes from the expression $-t_{n}=l_{n}\cdot n_{0}\tau+\beta_{n}$, with $\beta_{n}\in[0,n_{0}\tau)$. Clearly, $k_{n}\geq l_{n}\cdot n_{0}$ for $n\geq1$.

Let $i=l_{n}\cdot n_{0}$ in (\ref{discrete finite estimate}). Note that $k_{n}\geq l_{n}\cdot n_{0}$, again by (\ref{the choice of n0'}), (\ref{initial less than delta0}), we have
\begin{alignat*}{2}
&\|u(x_{3},\tilde{y},t_{n}+l_{n}\cdot n_{0}\tau)-u(x_{4},\tilde{y},t_{n}+l_{n}\cdot n_{0}\tau)\|\\
\leq\,&(2{\epsilon_{1}}^{-1}C_{\epsilon_{0}}M\gamma^{k_{n}}(e^{\epsilon_{0}\tau}(1+\epsilon))^{l_{n}\cdot n_{0}}+2C_{\epsilon_{0}}M(\gamma e^{\epsilon_{0}\tau}(1+\epsilon))^{l_{n}\cdot n_{0}})\delta_{0}\\
\leq\,&(2{\epsilon_{1}}^{-1}C_{\epsilon_{0}}M\gamma^{k_{n}}(e^{\epsilon_{0}\tau}(1+\epsilon))^{k_{n}}+2C_{\epsilon_{0}}M(\gamma e^{\epsilon_{0}\tau}(1+\epsilon))^{n_{0}})\delta_{0}\\
\leq\,&(2{\epsilon_{1}}^{-1}C_{\epsilon_{0}}M(\gamma e^{\epsilon_{0}\tau}(1+\epsilon))^{n_{0}}+2C_{\epsilon_{0}}M(\gamma e^{\epsilon_{0}\tau}(1+\epsilon))^{n_{0}})\delta_{0}<\frac{2}{3}\delta_{0},
\end{alignat*}
and hence,
$$d_{K_{1}}(F_{1}^{l_{n}\cdot n_{0}}(\Pi(x_{3},\tilde{y},t_{n}),\Pi(x_{4},\tilde{y},t_{n})),
F_{1}^{l_{n}\cdot n_{0}}(\Pi(x_{3},\tilde{y},t_{n}),\Pi(x_{3},\tilde{y},t_{n})))<\frac{2}{3}\delta_{0}<\delta_{0}.$$

Finally, again, we take in (\ref{the choice of delta0'}) $$((x_{1},y),(x_{2},y))=F_{1}^{l_{n}\cdot n_{0}}(\Pi(x_{3},\tilde{y},t_{n}),\Pi(x_{4},\tilde{y},t_{n})),$$
$$((x_{1}^{\prime},y^{\prime}),(x_{2}^{\prime},y^{\prime}))=F_{1}^{l_{n}\cdot n_{0}}(\Pi(x_{3},\tilde{y},t_{n}),\Pi(x_{3},\tilde{y},t_{n})),$$
and obtain
\begin{equation*}\label{less than delta}
d_{K_{1}}(F_{1}^{i}(\Pi(x_{3},\tilde{y},t_{n}),\Pi(x_{4},\tilde{y},t_{n})),F_{1}^{i}(\Pi(x_{3},\tilde{y},t_{n}),\Pi(x_{3},\tilde{y},t_{n})))<\delta,\tag{3.28}
\end{equation*}
for any $l_{n}\cdot n_{0}\leq i\leq(l_{n}+1)\cdot n_{0}$. In particular, one find an integer $i_{0}$ satisfying $l_{n}\cdot n_{0}\leq i_{0}\leq(l_{n}+1)\cdot n_{0}$ such that $t_{n}+i_{0}\tau\in [-\tau,0]$. Write $t_{\delta}=t_{n}+i_{0}\tau\in [-\tau,0]$. Note that $F_{1}^{i_{0}}(\Pi(x_{3},\tilde{y},t_{n}),\Pi(x_{j},\tilde{y},t_{n}))=(\Pi(x_{3},\tilde{y},t_{\delta}),\Pi(x_{j},\tilde{y},t_{\delta}))$, $j=3,4$. Then (\ref{less than delta}) directly implies that
$\|u(x_{3},\tilde{y},t_{\delta})-u(x_{4},\tilde{y},t_{\delta})\|<\delta$. Thus, we have proved the claim, which completes our proof.
\end{proof}

\begin{rmk}\label{remark3.1}
Proposition \ref{Prop substitute infty} and Proposition \ref{Prop substitute finite} play \emph{very crucial roles} in proving our main Theorem. For $C^{1,\alpha}$-smooth skew-product semiflows, these two Propositions were proved in Shen and Yi \cite[PartII, Lemma 4.6]{ShYi98}.
\end{rmk}

\begin{prop}\label{Prop P(K) equivalence P(K)=O(K)}
Let $K$ be as in Theorem \ref{Theorem main th}. Then

\textnormal{(a)} the proximal relation $P(K)$ on $K$ is an equivalence relation;

\textnormal{(b)} $P(K)=O(K)$.
\end{prop}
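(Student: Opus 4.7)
The plan is to prove (b) first, using Propositions \ref{Prop substitute infty} and \ref{Prop substitute finite}, and then deduce (a) from it. Since Proposition \ref{Prop residual set unordered}(ii) already gives $O(K) \subset P(K)$, part (b) reduces to the reverse inclusion: if $(x_{1},y),(x_{2},y) \in P(K)$, then $x_{1} - x_{2} \in \pm C$. I argue by contradiction, assuming the pair is unordered. Strong order preservation of $\Pi$ forces the pair to be unordered for all $t \le 0$ as well, since otherwise forward evolution from a past ordered state would make $x_{1} - x_{2} \in \pm C$ at $t = 0$. Proposition \ref{Prop substitute finite} then provides negative distality, so every proximal-witness sequence must satisfy $t_{n} \to +\infty$ with $\|u(x_{1},y,t_{n}) - u(x_{2},y,t_{n})\| \to 0$.

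Two sub-cases arise from the forward behavior. If the pair stays unordered for all $t \ge 0$, I choose $n$ so large that the time-shifted pair at $t_{n}$ has $X$-distance below the threshold $\delta_{0}$ of Proposition \ref{Prop substitute infty}; since that pair is unordered for all subsequent times, the proposition yields $\|u(x_{1},y,t) - u(x_{2},y,t)\| \to 0$ as $t \to +\infty$. If instead the pair becomes ordered at some minimal $t_{0} > 0$, strong monotonicity puts the difference into $\mathrm{Int}\, C$ for every $t > t_{0}$. In either sub-case, the desired contradiction is to be obtained by combining minimality of $K$ with uniqueness of the backward orbit on the flow extension: picking $s_{k} \to +\infty$ such that $\Pi_{s_{k}}(x_{1},y)$ returns to $(x_{1},y)$, the companion $\Pi_{s_{k}}(x_{2},y)$ subconverges to some $(\tilde{x}_{2},y) \in K$; positive asymptoticity (first sub-case) or strict forward ordering (second sub-case) forces $(x_{1},y)$ and $(\tilde{x}_{2},y)$ to coincide or be ordered, while the backward-distal, backward-unordered behavior of the original pair, pulled through the flow extension, forces incompatible information at $t = 0$.

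For (a), once $P(K) = O(K)$ is established, reflexivity and symmetry of $P(K)$ are inherited from $O(K)$. The nontrivial point is transitivity: given $(x_{i},y) \in K$ for $i = 1, 2, 3$ with $x_{1} - x_{2}, x_{2} - x_{3} \in \pm C$, the chained case (e.g.\ $x_{1} \ge x_{2} \ge x_{3}$) gives $x_{1} - x_{3} \in \pm C$ directly from the partial order on $X$. The non-obvious case is when $x_{1}$ and $x_{3}$ lie on the same side of $x_{2}$, say $x_{1}, x_{3} \ge x_{2}$. Assuming $x_{1} - x_{3} \notin \pm C$, Proposition \ref{Prop substitute finite} applied to $(x_{1},y),(x_{3},y)$ yields negative distality, while strong monotonicity places $u(x_{i},y,t) - u(x_{2},y,t) \in \mathrm{Int}\, C$ for $t > 0$ and Proposition \ref{Prop residual set unordered}(ii) makes each $(x_{i},y)$ proximal to $(x_{2},y)$. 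A diagonal extraction along a common time sequence on which $u(x_{1},y,\cdot)$ and $u(x_{3},y,\cdot)$ are both squeezed onto $u(x_{2},y,\cdot)$ will then contradict the negative distality.

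The principal obstacle is Step 1, namely converting the quantitative estimates of Propositions \ref{Prop substitute infty} and \ref{Prop substitute finite} into a global contradiction. These two propositions substitute for the Pesin-theoretic invariant submanifolds used in the $C^{1,\alpha}$-proof of Shen and Yi, but the closing argument must be made by direct comparison, combining compactness of $K$, density of orbits from minimality, uniqueness of backward orbits on the flow extension, and possibly a further discretization along return times that invokes the ``internal growth control'' of Proposition \ref{prop ES-contin}(v). The second sub-case (the pair becomes ordered at $t_{0} > 0$ while remaining unordered at zero) is particularly delicate, since strict forward ordering must be reconciled with the assumed unordered-ness at $t = 0$ without any backward order-preserving input.
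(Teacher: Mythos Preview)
Your plan inverts the paper's order---proving (b) before (a)---and this creates a genuine gap. In both of your sub-cases for (b) you end with a pair $(x_1,y),(x_2,y)$ that is positively proximal (even positively asymptotic in sub-case~1) and negatively distal, and you then attempt to close by a minimality/recurrence argument. But positive proximality together with negative distality is \emph{not} contradictory in a general minimal flow; the paper obtains the contradiction via Proposition~\ref{prop P=P+=P-}, which gives $P(K)=P_-(K)$ only under the hypothesis that $P(K)$ is an equivalence relation---that is, part~(a). Your sub-case~2 (``unordered at $t=0$, ordered at some $t_0>0$, negatively distal'') is in fact an internally consistent configuration, and the sketched closing (``pulling backward-distal behavior through the flow extension'') cannot be made precise without already knowing $P=P_-$. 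The same circularity infects your argument for (a): your ``diagonal extraction along a common time sequence'' does not work as stated, because proximality of $(x_1,y),(x_2,y)$ and of $(x_3,y),(x_2,y)$ furnishes two \emph{different} time sequences with no mechanism to merge them; and even if you did obtain positive proximality of $(x_1,y),(x_3,y)$, you would still need $P=P_-$ to contradict the negative distality from Proposition~\ref{Prop substitute finite}.

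The paper proceeds in the opposite order. Part~(a) is proved directly by a four-case analysis on whether each of the two given proximal pairs becomes ordered at some forward time. For ordered pairs the key device is the residual set $Y_0$ of Proposition~\ref{Prop residual set unordered}(i): one chooses a \emph{single} sequence $t_n\to\infty$ with $y\cdot t_n\to y_0\in Y_0$, and since the $y_0$-fiber carries no nontrivial ordered pair, limits of ordered pairs along this sequence must coincide---this supplies exactly the common time sequence your diagonal step was missing. For never-ordered pairs one uses Proposition~\ref{Prop substitute infty} (proximality gives a moment of small gap, hence forward asymptoticity). Combining the two devices yields transitivity of $P(K)$. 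Once (a) is in hand, (b) is one line: if $((x_1,y),(x_2,y))\in P(K)\setminus O(K)$, Proposition~\ref{Prop substitute finite} gives negative distality, contradicting $P(K)=P_-(K)$ from Proposition~\ref{prop P=P+=P-}.
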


\begin{proof}
The proof of this proposition is analogous to that in \cite[PartII, Lemma 4.7 and Lemma 4.8]{ShYi98}. We give the detail for the sake of completeness.

(a) Since $(Y,\mathbb{R})$ is distal, $P(K)=\{((x_{1},y),(x_{2},y))\in K|(x_{1},y),(x_{2},y)\text{ are }proximal\}$. We only need to check the transitivity. Let $((x_{1},y),(x_{2},y)),((x_{2},y),(x_{3},y))\in P(K)$. One of the following alternatives must occur:

(i). There is a $t_{0}\geq0$ such that $(\Pi(x_{1},y,t_{0}),\Pi(x_{2},y,t_{0}))\in O(K)$ and $(\Pi(x_{2},y,t_{0}),$ $\Pi(x_{3},y,t_{0}))\in O(K)$.

(ii). There is a $t_{0}\geq0$ such that $((\Pi(x_{1},y,t_{0}),\Pi(x_{2},y,t_{0}))\in O(K)$ but $(\Pi(x_{2},y,t),\Pi(x_{3},y,t))$ $\notin O(K)$ for all $t\geq0$.

(iii). There is a $t_{0}\geq0$ such that $(\Pi(x_{2},y,t_{0}),\Pi(x_{3},y,t_{0}))\in O(K)$ but $(\Pi(x_{1},y,t),\Pi(x_{2},y,t))$ $\notin O(K)$ for all $t\geq0$.

(iv). For all $t\geq0$, $(\Pi(x_{1},y,t),\Pi(x_{2},y,t))\notin O(K)$, $(\Pi(x_{2},y,t),\Pi(x_{3},y,t))\notin O(K)$.

If (i) holds, then denote $(x_{i}^{*},y^{*})=\Pi(x_{i},y,t_{0})$, $i=1,2,3$. Let $y_{0}\in Y_{0}$ in Proposition \ref{Prop residual set unordered}(i). Since $(Y,\mathbb{R})$ is minimal, there exists a sequence $\{t_{n}\}$ such that $y^{*}\cdot t_{n}\to y_{0}$. By taking a subsequence, if necessary, we assume that $\Pi(x_{1}^{*},y^{*},t_{n})\to (\hat{x_{1}},y_{0})$, $\Pi(x_{2}^{*},y^{*},t_{n})\to (\hat{x_{2}},y_{0})$. Since $O(K)$ is a closed relation, $(\hat{x_{1}},y_{0})$ and $(\hat{x_{2}},y_{0})$ are ordered. By Proposition \ref{Prop residual set unordered}(i), one has $(\hat{x_{1}},y_{0})=(\hat{x_{2}},y_{0})$. Hence, $d_{K}(\Pi(x_{1}^{*},y^{*},t_{n}),\Pi(x_{2}^{*},y^{*},t_{n}))\to0$. Similarly, by taking a subsequence if necessary, we have $d_{K}(\Pi(x_{2}^{*},y^{*},t_{n}),\Pi(x_{3}^{*},y^{*},t_{n}))\to0$. Consequently, $d_{K}(\Pi(x_{1}^{*},y^{*},t_{n}),\Pi(x_{3}^{*},y^{*},t_{n}))\to0$, that is, $d_{K}(\Pi(x_{1},y,t_{n}+t_{0}),\Pi(x_{3},y,t_{n}+t_{0}))\to0$. So, $((x_{1},y),(x_{3},y))\in P(K)$.

If (ii) holds, then take $y_{0}\in Y_{0}$ in Proposition \ref{Prop residual set unordered}(i). Let $(x_{4},y_{0})\in K$, since $K$ is minimal, there exists a sequence $t_{n}\to\infty$ such that $y\cdot t_{n}\to y_{0}$. By repeating the same argument in (i), there is a subsequence, still denoted by $t_{n}$, such that $d_{K}(\Pi(x_{1},y,t_{n}),\Pi(x_{2},y,t_{n}))\to0$.

Since $((x_{2},y),(x_{3},y))\in P(K)$, there exists $t_{0}\in\mathbb{R}$ such that $d_{K}(\Pi(x_{2},y,t_{0}),\Pi(x_{3},y,t_{0}))<\delta_{0}$ ($\delta_{0}$ is in Proposition \ref{Prop substitute infty}). Then it follows from Proposition \ref{Prop substitute infty} that $\|u(x_{2},y,t)-u(x_{3},y,t)\|\to0$, as $t\to+\infty$. Therefore, we have $\|u(x_{1},y,t_{n})-u(x_{3},y,t_{n})\|\to0$, as $t\to+\infty$, that is, $((x_{1},y),(x_{3},y))\in P(K)$.

The proof of (iii) is analogous, we omit it.

Finally, if (iv) holds, then $(\Pi(x_{1},y,t),\Pi(x_{2},y,t))\notin O(K)$ for all $t\in \mathbb{R}$. Since $((x_{1},y),(x_{2},y))$ $\in P(K)$, there exists $\zeta\in\mathbb{R}$ such that $\|u(x_{1},y,\zeta)-u(x_{2},y,\zeta)\|$ is sufficiently small. It then follows from Proposition \ref{Prop substitute infty} that $\|u(x_{1},y,t)-u(x_{2},y,t)\|\to0$, as $t\to+\infty$. Similarly, we obtain $\|u(x_{2},y,t)-u(x_{3},y,t)\|\to0$, as $t\to+\infty$. Therefore, $\|u(x_{1},y,t)-u(x_{3},y,t)\|\to0$, that is, $((x_{1},y),(x_{3},y))\in P(K)$.

(b) By Proposition \ref{Prop residual set unordered}(ii), $O(K)\subset P(K)$. Now, we prove $P(K)\subset O(K)$. Suppose that $((x_{1},y),(x_{2},y))\in P(K)\backslash O(K)$. Then Proposition \ref{Prop substitute finite} implies that $(x_{1},y),(x_{2},y)$ are negatively distal. Therefore, $(x_{1},y),(x_{2},y)$ are both proximal and negatively distal. But this is impossible by Proposition \ref{prop P=P+=P-} and Proposition \ref{Prop P(K) equivalence P(K)=O(K)}(a).
\end{proof}

Now, we are ready to prove Theorem \ref{Theorem main th}.

\noindent$\emph{Proof of Theorem \ref{Theorem main th}.}$
By Proposition \ref{Prop P(K) equivalence P(K)=O(K)}(b), $P(K)=O(K)$ are invariant and closed. Let $\tilde{Y}=K/P(K)=K/O(K)$. Then, $(K,\mathbb{R})$ induces a flow $(\tilde{Y},\mathbb{R})$ by the invariance of $P(K)$. Clearly, $(\tilde{Y},\mathbb{R})$ is distal. Let $p:K\to Y; (x,y)\mapsto y$ be the natural projection. Denote by $\tilde{p}:\tilde{Y}\to Y; [(x,y)]\mapsto y$ the projection induced by $p$; and denote by $p^{*}:K\to \tilde{Y}=K/P(K)$ the natural projection to $\tilde{Y}$ as $p^{*}(x,y)=[(x,y)],(x,y)\in K$. So, $p=\tilde{p}\circ p^{*}$. By the closeness of $P(K)$, $\tilde{p}$ and $p^{*}$ are continuous. Let $Y_{0}$ be the residual set given by Proposition \ref{Prop residual set unordered}(i) and fix a $y_{0}\in Y_{0}$. Since Proposition \ref{Prop residual set unordered}(i) implies no two points on $p^{-1}(y_{0})$ are ordered, $card(p^{-1}(y_{0}))=card(\tilde{p}^{-1}(y_{0}))$. Now, if $card(\tilde{p}^{-1}(y_{0}))=\infty$, then there is an accumulation point $(x_{*},y_{0})\in p^{-1}(y_{0})$. Choose a $(x_{0},y_{0})\in p^{-1}(y_{0})$ such that $(x_{0},y_{0})\neq(x_{*},y_{0})$ and $\|x_{0}-x_{*}\|<\delta_{0}$, where $\delta_{0}$ is in Proposition \ref{Prop substitute infty}. Since $(x_{0},y_{0}),(x_{*},y_{0})$ are not ordered, Proposition \ref{Prop residual set unordered}(i) implies that $u(x_{0},y_{0},t),u(x_{*},y_{0},t)$ are not ordered for all $t\geq0$. Hence, by Proposition \ref{Prop substitute infty}, $\|u(x_{0},y_{0},t)-u(x_{*},y_{0},t)\|\to0$ as $t\to+\infty$, which implies that $(x_{0},y_{0})$ and $(x_{*},y_{0})$ are proximal, a contradiction to Proposition \ref{Prop P(K) equivalence P(K)=O(K)}(b). Thus, there is an integer $N\geq1$ such that $card(\tilde{p}^{-1}(y_{0}))=N$. By Proposition \ref{prop N-1}, $\tilde{p}$ is an $N$-1 extension.

Next, for any $y\in Y_{0}$ and any $[(x,y)]\in{\tilde{p}}^{-1}(y)$, since $(x^{\prime},y)$ and $(x,y)$ are not ordered for any $(x^{\prime},y)\neq(x,y)$, one has ${p^{*}}^{-1}[(x,y)]=\{(x,y)\}$, that is, $card({p^{*}}^{-1}[(x,y)])=1$. Since $Y_{0}$ is residual in $Y$, one has $\tilde{Y_{0}}=\{[(x,y)]\in\tilde{p}^{-1}(y)|y\in Y_{0}\}$ is residual in $\tilde{Y}$. Therefore, $p^{*}:(K,\mathbb{R})\to (\tilde{Y},\mathbb{R})$ is an almost 1-1 extension.

Now, if $(Y,\mathbb{R})$ is almost periodic, then by Proposition \ref{prop N-1}, $(\tilde{Y},\mathbb{R})$ is also almost periodic, and $(K,\mathbb{R})$ is almost automorphic, since $p^{*}$ is an almost 1-1 extension.\hfill $\square$
\vskip 2mm

Before ending this paper, we give the following two additional remarks.

\begin{rmk}\label{remark3.2}
Under the $C^{1}$-smoothness assumption, we show in Theorem \ref{Theorem main th} the almost automorphy of linearly stable minimal set for strongly monotone skew-product semiflows. The result was obtained by Shen and Yi \cite[PartII, Theorem 4.5]{ShYi98} for $C^{1,\alpha}$-systems. As a consequence, one can apply our theoretical result (Theorem \ref{Theorem main th}) to obtain all the same results in \cite[Part III]{ShYi98}, under the lower $C^{1}$-regularity (instead of $C^{1,\alpha}$), for time-almost periodic differential equations, including ODEs, parabolic equations and delay equations.
\end{rmk}

\begin{rmk}\label{remark3.3}
It deserves to point out that, under $C^{1,\alpha}$-smoothness, Novo et al. \cite{obaya13} showed that assumptions (i)-(ii) in Theorem \ref{Theorem main th} imply that $K$ admits a flow extension automatically. It is an interesting question whether it remains true under the weaker $C^{1}$-smoothness hypothesis.
\end{rmk}

\end{document}